\newtheorem{theorem}{Theorem}[section]
\newtheorem{lemma}[theorem]{Lemma}
\newtheorem{proposition}[theorem]{Proposition}
\newtheorem{corollary}[theorem]{Corollary}
\newtheorem{definition}[theorem]{Definition}
  \newtheorem{example}[theorem]{Example}
  \newtheorem{remark}[theorem]{Remark}
\newenvironment{proof}{    
  \noindent
  \textbf{Proof.}}{
  \hfill $\Box$
  \vspace{3mm}
}
\numberwithin{equation}{section}
\newcommand{\norm}[1]{{\left\|#1\right\|}}
\newcommand{\N}{\mathbb{N}} 
\newcommand{\C}{\mathbb{C}} 
\newcommand{\K}{\mathbb{K}} 
\newcommand{\D}{\mathbb{D}} 
\DeclareMathOperator*{\ind}{ind}    
\title{Frames and representing systems in Fr\'echet spaces and their duals}
\author{J. Bonet, C. Fern\'andez, A. Galbis, J.M. Ribera}
\date{}
\begin{document}

\maketitle

\vspace{1cm}
\begin{abstract}
Frames and Bessel sequences in Fr\'echet spaces and their duals are defined and studied. Their relation with Schauder frames and representing systems is analyzed. The abstract results presented here, when applied to concrete spaces of analytic functions, give many examples and consequences about sampling sets and Dirichlet series expansions.
\end{abstract}

\renewcommand{\thefootnote}{}
\footnotetext{\emph{Key words and phrases.} frames, representing systems, Schauder frame, Fr\'echet spaces, $\left(LB\right)$-spaces, sufficient and weakly sufficient sets.

\emph{MSC 2010:} 46A04, 42C15, 46A13, 46E10. }
\section{Introduction and preliminaries}

The purpose of this paper is twofold. On the one hand we study $\Lambda$-Bessel sequences $(g_i)_i \subset E'$, $\Lambda$-frames and frames with respect to $\Lambda$ in the dual of a Hausdorff locally convex space $E$, in particular for Fr\'echet spaces and complete $\left(LB\right)$-spaces $E$, with $\Lambda$ a sequence space. We investigate the relation of these concepts with representing systems in the sense of Kadets and Korobeinik \cite{Kadets-Korobeinik-Studia} and with the Schauder frames, that were investigated by the authors in \cite{bfgr}. On the other hand our article emphasizes the deep connection of frames for Fr\'echet and $\left(LB\right)$-spaces with the sufficient and weakly sufficient sets for weighted Fr\'echet and $\left(LB\right)$-spaces of holomorphic functions. These concepts correspond to sampling sets in the case of Banach spaces of holomorphic functions. Our general results in Sections \ref{sec-general} and  \ref{sec:LB} permit us to obtain as a consequence many examples and results in the literature in a unified way in Section \ref{Examples}.

Section \ref{sec-general} of our article is inspired by the work of Casazza, Christensen and Stoeva \cite{casazza_jmaa} in the context of Banach spaces. Their characterizations of Banach frames and frames with respect to a $BK$-sequence space gave us the proper hint to present here the right definitions in our more general setting; see Definition \ref{FR_Pre_Def_Fr}. The main result of this section is Proposition \ref{frame-repsystem}. A point of view different from ours concerning frames in Fr\'echet spaces was presented by Pilipovic and Stoeva \cite{pilipovic_stoeva_2011} and \cite{pilipovic_stoeva_2014}. Banach frames were introduced by Gr\"ochenig.
Shrinking and boundedly
complete Schauder frames
for Banach spaces were studied by Carando, Lasalle and Schmidberg
\cite{carando_lassalle}. Other precise references
to work in this direction in the Banach space setting can be seen in
\cite{casazza_jmaa}. Motivated by the applications to weakly sufficient sets for weighted $\left(LB\right)$-spaces of holomorphic functions we present several abstract results about $\Lambda$-frames in complete $\left(LB\right)$-spaces, that require a delicate analysis, in Section \ref{sec:LB}. Our main result is Theorem \ref{frames_DFS}. Finally, many applications, results and examples are collected in Section \ref{Examples} concerning sufficient sets for weighted Fr\'echet spaces of holomorphic functions and weakly sufficient sets for weighted $\left(LB\right)$-spaces of holomorphic functions. We include here consequences related to the work of many authors; see \cite{abanin_khoi}, \cite{Abanin_Varziev}, \cite{Bonet_Domanski_2003_Sampling}, \cite{khoi-thomas}, \cite{korobeinik2}, \cite{melikhov}, \cite{napalkov1}, \cite{Schneider} and \cite{Taylor_1972_Discrete}.

Throughout this work, $E$ denotes a locally convex Hausdorff linear topological space (briefly, a lcs) and $cs(E)$ is the system of continuous seminorms describing the topology of $E.$ Sometimes additional assumptions on $E$ are added. The symbol $E'$ stands for the topological dual of $E$ and $\sigma(E',E)$ for the weak* topology on $E'$. We set  $E'_\beta$ for the dual $E'$ endowed with the topology $\beta(E',E)$ of uniform convergence on the bounded sets of $E.$ We will refer to $E'_\beta$ as the strong dual of $E.$ The Mackey topology $\mu(E',E)$ is the topology on $E'$ of the uniform convergence on the absolutely convex and weakly compact sets of $E.$ Basic references for lcs are \cite{jarchow} and \cite{Meise_Vogt_1997_Introduction}. If $T:E \rightarrow F$ is a continuous linear operator, its transpose is denoted by $T':F' \rightarrow E'$, and it is defined by $T'(v)(x):=v(T(x)), x \in E, v \in F'$.  We recall that a Fr\'echet space is a complete metrizable lcs. An $\left(LB\right)$-space is a lcs that can be
represented as an injective inductive limit of a sequence $\left(E_n\right)_n $ of Banach spaces. In most of the results we need the assumption that the lcs is barrelled. The reason is that Banach-Steinhaus theorem holds for barrelled lcs. Every Fr\'echet space and every $\left(LB\right)$-space is barrelled. We refer the reader to  \cite{jarchow} and \cite{PCB} for more information about barrelled spaces.
\par\medskip\noindent
As usual $\omega$ denotes the countable product $\K^{\N}$ of copies of the scalar field, endowed by the product topology, and  $\varphi$ stands for the space of sequences with finite support. A sequence space $\bigwedge$ is a lcs which contains $\varphi$ and is continuously included in $\omega.$

\begin{definition}{\rm Given a sequence space $\Lambda$ its $\beta$-dual space is defined as
$$\Lambda^{\beta} := \left\{(y_i)_i \in \omega : \sum_{ i = 1}^{\infty} x_i y_i\ \mbox{converges for every}\ (x_i)_i \in \Lambda \right\}.$$
}
\end{definition}

Clearly, $(\Lambda, \Lambda ^\beta)$ is a dual pair. Under additional assumptions we even have the  relation given in the next essentially known lemma.

\begin{lemma}\label{FR_Pre_Lem_DualS}
Let $\Lambda$ be a barrelled sequence lcs for which the canonical unit vectors $( e_i)_i$ form a Schauder basis. Then, its topological dual $\Lambda'$ can be algebraically identified with its $\beta$-dual $\Lambda^{\beta}$ and the canonical unit vectors $( e_i)_i$ are a basis for $(\Lambda^\beta, \mu(\Lambda^\beta, \Lambda)).$
Moreover if we consider  on $\Lambda^{\beta} $  the system of seminorms given by $$p_B( (y_i)_i):=\sup_{x \in B} \left| \sum_i x_iy_i\right| ,$$ where $B$ runs on the bounded subsets of $\Lambda$ then $\left( \Lambda^\beta,\left(p_B\right)_B\right)$ is topologically isomorphic to $  \left( \Lambda', \beta\left(\Lambda', \Lambda\right)\right)$.

\end{lemma}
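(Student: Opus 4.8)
The plan is to run everything through the partial-sum operators of the basis and their transposes, treating the three assertions in turn. For the algebraic identification $\Lambda'=\Lambda^{\beta}$ I would set up two maps and check they are mutually inverse. Given $y=(y_i)_i\in\Lambda^{\beta}$, the partial sums $x\mapsto\sum_{i=1}^{n}x_iy_i$ are continuous on $\Lambda$ (the coordinate functionals are continuous because $\Lambda$ is continuously included in $\omega$), and by the definition of $\Lambda^{\beta}$ they converge pointwise on $\Lambda$; since $\Lambda$ is barrelled, the Banach--Steinhaus theorem promotes the pointwise limit $x\mapsto\sum_i x_iy_i$ to an element of $\Lambda'$. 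Conversely, given $u\in\Lambda'$, put $y_i:=u(e_i)$; expanding $x=\sum_i x_ie_i$ in the Schauder basis and using continuity of $u$ gives $u(x)=\sum_i x_iy_i$, which simultaneously shows $(y_i)_i\in\Lambda^{\beta}$ and that $u$ is the functional attached to $y$. Testing on the $e_i$ recovers the coordinates, so the two assignments are inverse to each other, giving the identification.

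For the basis statement I would introduce the finite-rank projections $S_nx:=\sum_{i=1}^{n}x_ie_i$. Because $(e_i)_i$ is a Schauder basis of the barrelled space $\Lambda$, the family $(S_n)_n$ converges pointwise to the identity and is therefore equicontinuous (Banach--Steinhaus). Passing to transposes under the identification above gives $S_n'y=\sum_{i=1}^{n}y_ie_i$, and $\langle x,S_n'y\rangle=\sum_{i=1}^{n}x_iy_i\to\langle x,y\rangle$ for every $x\in\Lambda$, so $\sum_{i=1}^{n}y_ie_i\to y$ in $\sigma(\Lambda^{\beta},\Lambda)$; uniqueness of the coefficients is clear since the $j$-th coordinate is recovered by pairing with $e_j\in\varphi\subseteq\Lambda=(\Lambda^{\beta},\mu(\Lambda^{\beta},\Lambda))'$. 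It remains to upgrade this weak$^{*}$ convergence to convergence in the Mackey topology, i.e.\ to prove that for every absolutely convex $\sigma(\Lambda,\Lambda^{\beta})$-compact set $K\subseteq\Lambda$,
\[
  \sup_{x\in K}\Bigl|\sum_{i>n}x_iy_i\Bigr|\longrightarrow 0 .
\]

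This upgrade is the main obstacle. The delicate point is that equicontinuity together with weak$^{*}$ nullity of the remainders $T_ny:=y-S_n'y$ is \emph{not} by itself enough to force uniform convergence on a weakly compact $K$: a general equicontinuous, $\sigma(\Lambda^{\beta},\Lambda)$-null sequence need not converge in $\mu(\Lambda^{\beta},\Lambda)$, so one must exploit that the $T_ny$ are the remainders of one single convergent expansion rather than an arbitrary null sequence. I would test $\mu$-convergence on the Banach disk $\Lambda_K$ generated by $K$ (a Banach space precisely because $K$ is weakly compact, with $\Lambda_K\hookrightarrow\Lambda$ weakly compact) and combine the weak sequential compactness of $K$ from Eberlein--\v{S}mulian with the decreasing-support structure of the remainders $T_ny$ to exclude a subsequence along which the supremum stays bounded away from $0$. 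I expect this interchange-of-limits step, where the weak compactness of $K$ (rather than mere boundedness) must be used in full, to be the technical heart; the weaker conclusion that $(e_i)_i$ is a basis for the topology of uniform convergence on \emph{precompact} subsets of $\Lambda$ follows already from equicontinuity and is the natural starting point.

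Finally, the topological isomorphism with the strong dual is immediate once $\Lambda'=\Lambda^{\beta}$ is established: under this identification the seminorm $p_B(y)=\sup_{x\in B}\bigl|\sum_i x_iy_i\bigr|=\sup_{x\in B}|u(x)|$ is exactly a member of the canonical fundamental system of seminorms defining $\beta(\Lambda',\Lambda)$ as $B$ runs over the bounded subsets of $\Lambda$. Hence $\bigl(\Lambda^{\beta},(p_B)_B\bigr)$ and $\bigl(\Lambda',\beta(\Lambda',\Lambda)\bigr)$ carry the same topology on the same underlying space, which is the asserted isomorphism.
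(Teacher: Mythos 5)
Your first and third steps are correct and are exactly the paper's own argument: the bijection $h\mapsto(h(e_i))_i$ (injective because $h(x)=\sum_i x_ih(e_i)$ by the basis expansion, surjective by Banach--Steinhaus on the barrelled space $\Lambda$), and the observation $p_B((h(e_i))_i)=\sup_{x\in B}|h(x)|$ for the strong topology. The genuine gap is the step you yourself single out: your proposal never proves that $\sum_{i\le n}y_ie_i\to y$ in $\mu(\Lambda^\beta,\Lambda)$; what you offer there is a programme (Banach disk of $K$, Eberlein--\v{S}mulian, ``decreasing-support structure of the remainders'') rather than an argument, so the middle assertion of the lemma is left unproved. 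For comparison, the paper does no better: its proof of this point consists of the bare assertion that $\sup_{x\in K}\bigl|\sum_{i\ge n}x_iy_i\bigr|\to 0$ for every $\sigma(\Lambda,\Lambda^\beta)$-compact $K$, with no justification. So you have located precisely the point the paper glosses over, and your warning that equicontinuity plus weak$^*$ convergence of the remainders does not yield Mackey convergence is well taken.

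In fact the gap cannot be closed, because the Mackey statement is false in this generality; no elaboration of your plan (nor of the paper's sentence) can succeed. Counterexample: let $(\phi_i)_i$ be the Faber--Schauder basis of $C[0,1]$ with coefficient functionals $(c_i)_i$, and let $\Lambda:=\{(c_i(f))_i:\ f\in C[0,1]\}$ with the norm making $J:f\mapsto (c_i(f))_i$ an isometry; this is a Banach (hence barrelled) sequence space whose unit vectors form a Schauder basis, and $y:=(\phi_i(1/3))_i\in\Lambda^\beta$ represents $\delta_{1/3}$, since $\sum_{i\le n}y_ic_i(f)=(S_nf)(1/3)\to f(1/3)$. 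For even $m$ the point $a_m:=1/3-2^{-m}/3$ is a dyadic point of level $m$, and $S_{2^m+1}f$ is the piecewise linear interpolation of $f$ at the level-$m$ dyadic points. Choose $f_m$ continuous with $f_m(a_m)=-1$, $|f_m|\le 1$, supported in a neighbourhood of $a_m$ so small that it contains neither $1/3$ nor any other level-$m$ dyadic point. Then $\|f_m\|_\infty=1$ and $f_m\to 0$ pointwise (the supports shrink to $\{1/3\}$ but never contain it), hence $f_m\to 0$ weakly by dominated convergence against every Borel measure; thus $K:=\{Jf_m:\ m\ \mbox{even}\}\cup\{0\}$ is $\sigma(\Lambda,\Lambda^\beta)$-compact (pass to its closed absolutely convex hull, weakly compact by Krein's theorem, if absolute convexity is wanted). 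But the interpolation gives $(S_{2^m+1}f_m)(1/3)=-2/3$ while $f_m(1/3)=0$, so the tail of the expansion of $y$ at $n=2^m+2$ evaluated at $Jf_m$ equals $2/3$: the tails do not tend to $0$ uniformly on $K$. Since any $\mu(\Lambda^\beta,\Lambda)$-convergent expansion $y=\sum_i c_ie_i$ must have $c_i=y_i$ (pair with $e_j\in\varphi\subset\Lambda$), $y$ admits no Mackey-convergent expansion at all. This also shows that the interchange of limits you hoped to extract from Eberlein--\v{S}mulian genuinely fails: $Jf_m\to 0$ weakly, yet $\sum_{i\le 2^m+1}y_i(Jf_m)_i=-2/3\not\to 0$. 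What survives in general is exactly your ``starting point'': $(e_i)_i$ is a basis of $\Lambda^\beta$ for $\sigma(\Lambda^\beta,\Lambda)$ and for the topology of uniform convergence on precompact subsets of $\Lambda$. The Mackey claim (and hence the lemma as stated) requires extra hypotheses, e.g.\ that weakly compact subsets of $\Lambda$ be precompact ($\Lambda$ semi-Montel, which covers the Fr\'echet--Schwartz and Montel $(LB)$ sequence spaces occurring in the paper's applications) or that the basis be shrinking; under such hypotheses your equicontinuity argument already completes the proof.
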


\begin{proof}
The map $$\psi : \Lambda' \to \Lambda^{\beta}, \, h\to (h(e_i))_i$$ is a linear bijection. In fact, for every $h\in\Lambda^\prime$ and $x=(x_i)_i \in \Lambda$ we have $h(x)=\sum_ix_ih(e_i)$ which implies that $\psi$ is well defined and obviously linear and injective. The barrelledness of $\Lambda$ and the Banach-Steinhauss theorem give the surjectivity.

If $K\subset \Lambda$ is $\sigma(\Lambda, \Lambda^\beta)$-compact, given $y\in \Lambda^\beta$ and $\varepsilon >0$ there is $n_0$ such that for $n \geq n_0,$ $$\left|\sum_{i=n}^\infty y_ix_i\right|<\varepsilon$$ for all $x\in K,$ from where $y=\sum_iy_ie_i$ in the Mackey topology $ \mu(\Lambda^\beta, \Lambda).$

As for each bounded subset $B$ of $\Lambda$ we have $$p_B((h(e_i))_i)=\sup_{x\in B}|h(x)|,$$ the topological identity follows.
\end{proof}
\par\medskip
From now on, if the sequence space $\Lambda$ satisfies the assumption in Lemma \ref{FR_Pre_Lem_DualS}, we identify $\Lambda '$ with $\Lambda^\beta$ and use always $\Lambda '.$

\section{General results}\label{sec-general}

\begin{definition}\label{FR_Pre_Def_Fr}{\rm
Let $E$ be a lcs and $\Lambda$ be a sequence space.
\begin{enumerate}

\item  $(g_i)_i \subset E'$ is called a \textit{$\Lambda$-Bessel sequence} in $E'$ if the analysis operator
$$
    \left.
	\begin{array}{rll}
		  U_{(g_i)_i} : E  &\longrightarrow  & \Lambda \\
		  x &\longrightarrow &  (g_i\left(x\right))_i
	\end{array}
    \right.
    $$ is continuous.

\item $(g_i)_i \subset E'$ is called a \textit{$\Lambda$-frame} if the analysis operator $U=U_{(g_i)_i}$ is an isomorphism into. If in addition  the range $U(E)$ of the analysis operator is complemented in $\Lambda$, then $(g_i)_i$ is said to be a \textit{frame for $E$ with respect to $\Lambda.$} In this case there exists $S: \Lambda \rightarrow E$ such that $S \circ U = \left. id \right|_E$.

\end{enumerate}
}
\end{definition}

For simplicity, when $(g_i)_i$ is clear from the context, the analysis operator will be denoted by $U$.

Definition \ref{FR_Pre_Def_Fr} is motivated by Definitions 1.2 and 1.3 in \cite{casazza_jmaa}. More precisely we had in mind Theorems 2.1 and 2.4 in \cite{casazza_jmaa} that give the right idea how to extend the definitions to the locally convex setting.

Clearly, given a lcs $E$ each sequence $( g_i )_i \subset E'$ is an $\omega$-Bessel sequence. On the other hand, if $\Lambda$ is a Hilbert space and $E$ is complete, each $\Lambda$-frame is a frame with respect to $\Lambda.$ Obviously a lcs space has a $\Lambda$-frame if and only if it is isomorphic to a subspace of $\Lambda$ and it has  a frame with respect to $\Lambda$ if and only if it is isomorphic to a complemented subspace of $\Lambda.$ Therefore, the property of having a $\Lambda$-frame is inherited by subspaces whereas having a frame with respect to $\Lambda$ is inherited by complemented subspaces.

\begin{remark}{\rm Let $E$ be a lcs, $\Lambda_1,$ $\Lambda_2$ sequence spaces, $( g_i )_i \subset E'$ a  $\Lambda_1$-Bessel  sequence and $( h_i )_i \subset E'$ a $\Lambda_2$-frame. We define $(f_k)_k \subset E'$ as $f_k=g_i$ for $k=2i-1$ and $f_k=h_i$ when $k=2i.$ Consider the sequence space $$\Lambda:=\{ (\alpha_k)_k: \, (\alpha_{2k-1})_k\in \Lambda_1, \mbox{ and } (\alpha_{2k})_k\in \Lambda_2\},$$  with the topology given by the seminorms
$$||\alpha||_{p,q}:=p((\alpha_{2k-1})_k)+q((\alpha_{2k})_k), \mbox{ where } p\in cs(\Lambda_1) , \, q\in cs(\Lambda_2).$$ Then $(f_k)_k$ is a $\Lambda$-frame for $E.$ In the case that $\Lambda_1 = \Lambda_2$ is one of the spaces $c_0$ or $\ell_p$ then $\Lambda = \Lambda_1 = \Lambda_2.$
}
\end{remark}

Let $E$ be a lcs, $(x_i)_i\subset E$ and $(x_i')_i\subset E' $. We recall that $\left( ( x_i' )_i , ( x_i )_i \right)$ is said to be a \textit{Schauder frame} of $E$  if
\begin{equation*}\label{seminorms}
 x = \sum_{i = 1}^{\infty} x_i' \left( x \right) x_i , \quad \mbox{ for all } x \in E,
\end{equation*}
the series converging in $E$ \cite{bfgr}. The associated sequence space is
$$
\Lambda := \left\{\alpha = ( \alpha_i )_i \in \omega : \sum_{ i = 1}^{\infty} \alpha_i x_i\ \mbox{is convergent in}\  E \right\}.$$ Endowed with the  system of seminorms
        \begin{equation}\label{eq_AD_Lem_DefiSeqSpace_Semi}
            {\mathcal Q} := \left\{ q_p\left(\left(\alpha_i\right)_i\right) := \sup_n p\left(\sum_{i = 1}^n \alpha_i x_i \right) , \mbox{ for all } p \in cs(E) \right\},
        \end{equation} $\Lambda$ is a sequence space and the canonical unit vectors form a Schauder basis (\cite[Lemma 1.3]{bfgr}).
\par\medskip

There is a close connection between $\Lambda$-frames and Schauder frames.

\begin{proposition} \begin{itemize}
                         \item[(a)] Let $\left( \left( x_i'\right)_i, \left(x_i\right)_i \right)$ be a Schauder frame for a barrelled and complete lcs $E$ and $\Lambda$ the associated sequence space. Then $( x_i')_i \subset E'$ is a frame for $E$ with respect to $\Lambda.$ If moreover $\Lambda$ is barrelled then $( x_i)_i \subset E$ is a frame for $E^\prime$ with respect to $\Lambda^\prime.$
\item[(b)] If $( x_i')_i \subset E'$ is a frame for $E$ with respect to a sequence space $\Lambda ,$ and $\Lambda$ has a Schauder frame, then $E$ also admits a Schauder frame.
                        \end{itemize}
 \end{proposition}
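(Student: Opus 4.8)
The plan is to organize everything around the two continuous linear maps naturally attached to the Schauder frame: the analysis operator $U\colon E\to\Lambda$, $U(x)=(x_i'(x))_i$, and the synthesis map $S\colon\Lambda\to E$, $S(\alpha)=\sum_i\alpha_i x_i$, and to show they satisfy $S\circ U=\mathrm{id}_E$. For part (a) I would first observe that $U$ is well defined and $S\circ U=\mathrm{id}_E$ directly from the frame identity $x=\sum_i x_i'(x)x_i$. The map $S$ is continuous because, for each $p\in cs(E)$, $p(S(\alpha))=\lim_n p(\sum_{i\le n}\alpha_i x_i)\le q_p(\alpha)$, using the very definition of the seminorms $q_p$ in \eqref{eq_AD_Lem_DefiSeqSpace_Semi}. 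The crucial point is the continuity of $U$: writing $S_n(x)=\sum_{i\le n}x_i'(x)x_i$, each $S_n$ is continuous and $S_n(x)\to x$, so $(S_n(x))_n$ is bounded for every $x$; barrelledness of $E$ and the Banach--Steinhaus theorem then force $(S_n)_n$ to be equicontinuous, which is exactly the estimate $q_p(U(x))=\sup_n p(S_n(x))\le q(x)$ for a suitable $q\in cs(E)$. Injectivity of $U$ is immediate, $S$ is a continuous left inverse, so $U$ is an isomorphism into by Definition \ref{FR_Pre_Def_Fr}, and $P:=U\circ S$ is an idempotent continuous operator with range $U(E)$, giving the complementation. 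Hence $(x_i')_i$ is a frame for $E$ with respect to $\Lambda$.

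For the second assertion of (a) I would dualize. Taking transposes of $S\circ U=\mathrm{id}_E$ gives $U'\circ S'=\mathrm{id}_{E'}$, and since transposes of continuous maps are continuous for the strong dual topologies (a bounded set $B$ is carried to a bounded set, and $p_B\circ T'=p_{T(B)}$), both $U'$ and $S'$ are continuous. The step requiring care is to recognize that, under the identification $\Lambda'=\Lambda^\beta$ provided by Lemma \ref{FR_Pre_Lem_DualS} (this is where barrelledness of $\Lambda$ is used, together with the fact that $(e_i)_i$ is a Schauder basis of $\Lambda$), the analysis operator of $(x_i)_i$ is precisely $S'\colon E'_\beta\to\Lambda'$, $g\mapsto(g(x_i))_i$; indeed $S'(g)(\alpha)=g(S(\alpha))=\sum_i\alpha_i g(x_i)$, and $(g(x_i))_i\in\Lambda^\beta$ because $\sum_i\alpha_i g(x_i)=g(S(\alpha))$ converges for $\alpha\in\Lambda$. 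Then $U'$ is a continuous left inverse of $S'$, so $S'$ is an isomorphism into, and $S'\circ U'$ is a continuous projection of $\Lambda'$ onto $S'(E')$. This shows that $(x_i)_i$ is a frame for $E'_\beta$ with respect to $\Lambda'$.

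For part (b) I would transport a Schauder frame $((y_j')_j,(y_j)_j)$ of $\Lambda$ to $E$ through $U$ and $S$. Given $x\in E$, expand $U(x)=\sum_j y_j'(U(x))y_j$ in $\Lambda$ and apply the continuous linear map $S$ to obtain
$$x=S(U(x))=\sum_j y_j'(U(x))\,S(y_j),$$
the series converging in $E$ because $S$ carries convergent series to convergent series. Setting $z_j':=y_j'\circ U=U'(y_j')\in E'$ and $z_j:=S(y_j)\in E$ then yields a Schauder frame $\big((z_j')_j,(z_j)_j\big)$ of $E$.

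I expect the main obstacle to be the continuity of the analysis operator $U$ in part (a): the rest is formal manipulation of $U$, $S$ and their transposes, but that step genuinely consumes the barrelledness of $E$ via Banach--Steinhaus and is what makes the seminorms $q_p$ dominated by continuous seminorms on $E$. A secondary subtlety, in the second assertion of (a), is the correct identification of the analysis operator of $(x_i)_i$ with $S'$ under $\Lambda'=\Lambda^\beta$, which is precisely where the additional barrelledness hypothesis on $\Lambda$ enters through Lemma \ref{FR_Pre_Lem_DualS}.
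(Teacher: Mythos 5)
Your proposal is correct and follows essentially the same route as the paper: for part (a) you use the same operators $U$ and $S$ with $S\circ U=\mathrm{id}_E$, and the same dualization $U'\circ S'=\mathrm{id}_{E'}$ together with the identification of $S'(x')=(x'(x_i))_i$ as the analysis operator of $(x_i)_i$ via Lemma \ref{FR_Pre_Lem_DualS}. The only difference is that you prove from scratch what the paper delegates to citations, namely the continuity of $U$ and $S$ (cited there from the proof of Theorem 1.4 in \cite{bfgr}, whose argument is precisely your Banach--Steinhaus/equicontinuity step) and, for (b), the fact that having a Schauder frame is inherited by complemented subspaces, which you unwind explicitly by transporting the Schauder frame of $\Lambda$ to $E$ through $U$ and $S$.
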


\begin{proof} (a) According to the proof of \cite[Theorem 1.4]{bfgr} the operators $U:E \to \Lambda$ and $S:\Lambda \to E$ given by $U(x):= (x_i'(x))_i$ and $S((\alpha_i)_i) := \sum_{i = 1}^{\infty} \alpha_i x_i$ respectively, are continuous and $S \circ U = id_E$. Consequently $( x_i')_i$ is a frame for $E$ with respect to $\Lambda.$ Under the additional assumption that $\Lambda$ is barrelled we have that $\Lambda^\prime = \Lambda^\beta$ is a sequence space. Moreover $S'(x'):= (x'(x_i))_i$  for each $x'\in E^\prime$ and from $U' \circ S' = id_{E'}$ we conclude that $( x_i)_i$ is a frame for $E'$ with respect to $\Lambda'.$
\par
Statement (b) follows from the fact that having a Schauder frame is inherited by complemented subspaces.
\end{proof}

\par\medskip
The barrelledness of the sequence space $\Lambda$ naturally associated to a Schauder frame follows for instance if $E$ is a Fr\'echet space. Observe that the dual space $E'$ need not be separable, in which case neither need $\Lambda^\prime$ be.

\begin{definition}{\rm (\cite{Kadets-Korobeinik-Studia}) A \textit{representing system} in a lcs  $E$ is a sequence $(x_i)_i$ in $E$ such that each $x\in E$ admits a representation
$$x=\sum_i c_i x_i$$ the series converging in $E.$
}
\end{definition}

The coefficients in the representation need not be unique, that is, one  can have $$0=\sum_i d_i x_i$$ for a non-zero sequence $(d_i)_i.$  Moreover, we do not assume that it is possible to find  a representation of this type with coefficients depending continuously on the vectors.

Clearly each topological basis is a representing system. Given a  Schauder frame $\left( \left( x_i'\right)_i, \left(x_i\right)_i \right)$, the sequence $\left(x_i\right)_i$ is a representing system. However, there are representing systems that are neither basis nor coming from a Schauder frames. In fact,   each separable Fr\'echet space  has a representing system \cite[Theorem 1]{Kadets-Korobeinik-Studia} but only those separable Fr\'echet spaces with the bounded approximation property admit a Schauder frame \cite[Corollary 1.5]{bfgr}.

\begin{definition}{\rm A $\Lambda$-{\it representing system} in a lcs  $E$ is a sequence $(x_i)_i$ in $E$ such that each $x\in E$ admits a representation
$x=\sum_i c_i x_i$ with $(c_i)_i\in \Lambda.$
}
\end{definition}

\begin{proposition}\label{frames-representing} Let $E$ be a barrelled lcs and let $\Lambda$ be a barrelled  sequence lcs for which the canonical unit vectors $( e_i)_i$ form a Schauder basis. Then
\begin{itemize}
\item[(i1)]
 $( g_i )_i \subset E'$ is a $\Lambda$-Bessel sequence if and only if the operator
 $$
 T: (\Lambda', \mu(\Lambda',\Lambda)) \to (E', \mu(E', E)),\ (d_i)_i \mapsto \sum_{i = 1}^{\infty} d_i g_i$$ is well defined and continuous.
 \item[(i2)] $( g_i )_i \subset E'$ is $\Lambda'$-Bessel for $E$ if and only if the operator $T:\Lambda \to (E', \beta(E',E))$ given by $T((d_i)_i) := \sum_{i = 1}^{\infty} d_i g_i$ is well defined and continuous.
 \item[(ii)] If $( g_i )_i\subset E'$ is a $\Lambda$-frame in $E,$ then $( g_i )_i$ is a $\Lambda'$-representing system for $(E', \mu(E',E)).$ If moreover $E$ is reflexive, then $( g_i )_i$ is a $\Lambda'$-representing system for $(E', \beta(E',E)).$
 \item[(iii)] If $( g_i )_i\subset E'$ is a $\Lambda$-Bessel sequence which is also a $\Lambda'$-representing system for $(E', \mu(E',E))$ then $( g_i )_i$ is a $(\Lambda, \sigma(\Lambda, \Lambda'))$-frame for $(E,\sigma(E,E')).$
\item[(iv)] If in addition $E$ and $\Lambda$ are Fr\'echet spaces, then $( g_i )_i\subset E'$ is a $\Lambda$-frame for $E$ if, and only if, $( g_i )_i$ is $\Lambda$-Bessel and a $\Lambda'$-representing system for $(E', \mu(E',E)).$
\end{itemize}

\end{proposition}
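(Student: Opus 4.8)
The plan is to recast the whole equivalence as a statement about the transpose of the analysis operator $U$ and then to invoke a classical duality theorem for maps between Fréchet spaces. The \emph{only if} direction is essentially free: if $(g_i)_i$ is a $\Lambda$-frame, then by Definition \ref{FR_Pre_Def_Fr} the analysis operator $U$ is an isomorphism into, hence in particular continuous, so $(g_i)_i$ is $\Lambda$-Bessel; and the assertion that $(g_i)_i$ is a $\Lambda'$-representing system for $(E', \mu(E',E))$ is exactly the content of part (ii). So the work lies entirely in the converse.

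For the \emph{if} direction I would first translate both hypotheses into properties of $U' = T$. Being $\Lambda$-Bessel means $U \colon E \to \Lambda$ is continuous, which by (i1) (equivalently, just by taking transposes) is the same as saying that $T \colon (\Lambda', \mu(\Lambda',\Lambda)) \to (E', \mu(E',E))$, $(d_i)_i \mapsto \sum_i d_i g_i$, is well defined and continuous. The representing system hypothesis says that every $x' \in E'$ is of the form $x' = \sum_i c_i g_i$ with $(c_i)_i \in \Lambda'$, the series converging in $\mu(E',E)$. Since $(e_i)_i$ is a basis of $(\Lambda', \mu(\Lambda',\Lambda))$ by Lemma \ref{FR_Pre_Lem_DualS} and $T(e_i) = g_i$, continuity of $T$ gives $x' = T((c_i)_i)$. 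Thus the representing system property is precisely the surjectivity of $U' = T$.

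It then remains to prove the following general fact: a continuous linear map $U \colon E \to \Lambda$ between Fréchet spaces with surjective transpose $U'$ is automatically an isomorphism into. Surjectivity of $U'$ forces $U$ to be injective, for a nonzero $x_0 \in \ker U$ would be annihilated by every functional in $U'(\Lambda') = E'$, contradicting that $E'$ separates the points of $E$. By the closed range theorem for Fréchet spaces, $U(E)$ is closed in $\Lambda$ if and only if $U'(\Lambda')$ is $\sigma(E',E)$-closed in $E'$; since $U'(\Lambda') = E'$, this holds trivially and $U(E)$ is closed. As $U(E)$ is then itself a Fréchet space and $U \colon E \to U(E)$ is a continuous linear bijection between Fréchet spaces, the open mapping theorem shows that $U$ is a topological isomorphism onto $U(E)$, i.e.\ $U$ is an isomorphism into and $(g_i)_i$ is a $\Lambda$-frame.

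The transpose identifications and the separation argument are routine. The main obstacle — and the reason both $E$ and $\Lambda$ must be assumed Fréchet rather than merely barrelled — is the step invoking the closed range theorem, i.e.\ the passage from weak*-closedness of $U'(\Lambda')$ to closedness of $U(E)$. Note that part (iii) already delivers, under exactly these hypotheses, that $(g_i)_i$ is a frame for the weak topologies $\sigma(E,E')$ and $\sigma(\Lambda,\Lambda')$; the content of (iv) is precisely that this weak frame can be upgraded to a frame for the original Fréchet topologies, and it is this upgrade that the closed-range/open-mapping mechanism provides.
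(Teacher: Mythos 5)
Your argument for part (iv) is correct and is, in substance, the paper's own proof: both reduce the frame property to surjectivity of the transpose $T=U'$ and then invoke the closed range theorem for Fr\'echet spaces \cite[9.6.3]{jarchow}. The only organizational difference is that the paper first records (iii) --- that $U$ is an isomorphism into for the weak topologies $\sigma(E,E')$ and $\sigma(\Lambda,\Lambda')$, via \cite[9.6.1]{jarchow} --- and then uses the closed range theorem to upgrade this weak homomorphism to a topological one, whereas you obtain injectivity of $U$ from the fact that $U'(\Lambda')=E'$ separates the points of $E$, deduce that $U(E)$ is closed because $U'(\Lambda')=E'$ is trivially $\sigma(E',E)$-closed, and finish with the open mapping theorem applied to the bijection $E\to U(E)$. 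These are two equivalent ways of exploiting the same theorem, and your translation of the representing-system hypothesis into surjectivity of $T$ (using that $(e_i)_i$ is a basis of $(\Lambda',\mu(\Lambda',\Lambda))$ by Lemma \ref{FR_Pre_Lem_DualS}, that $T(e_i)=g_i$, and uniqueness of limits in the Hausdorff topology $\mu(E',E)$) is exactly what the paper does.

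The gap is one of coverage rather than of substance: the statement is the whole of Proposition \ref{frames-representing}, and your proposal establishes only (iv) while using (i1), (ii) and (iii) as if they were already available. Part (i2) is never addressed. Part (i1), which you dismiss as ``just by taking transposes'', does require the observations that transposes are continuous for the Mackey topologies, that barrelledness makes the original topologies of $E$ and $\Lambda$ coincide with $\mu(E,E')$ and $\mu(\Lambda,\Lambda')$, and that the basis property of $(e_i)_i$ in $(\Lambda',\mu(\Lambda',\Lambda))$ identifies $U'$ with $(d_i)_i\mapsto\sum_i d_i g_i$. Most importantly, the necessity direction of (iv) rests on (ii), which you cite but never prove; its content --- that the transpose of a topological isomorphism into is surjective --- is a Hahn--Banach extension argument (each $x'\in E'$ induces a continuous functional $U(x)\mapsto x'(x)$ on $U(E)$, which extends to an element of $\Lambda'$), and without it your ``only if'' direction is circular, since (ii) is itself part of the statement being proved. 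Once these routine pieces (and the reflexive addendum in (ii), where $\mu(E',E)=\beta(E',E)$) are filled in, your proof is complete and matches the paper's.
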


\begin{proof} (i1) Let us assume that $( g_i )_i$ is a $\Lambda$-Bessel sequence and consider $T=U'$ the transposed map of the analysis operator $U:E \to \Lambda,$ $U(x) = (g_i(x))_i.$ Then $T:(\Lambda', \mu(\Lambda',\Lambda)) \to (E', \mu(E', E))$ is continuous and $T(e_i)=g_i.$ As the canonical unit vectors are a basis for  $(\Lambda', \mu(\Lambda',\Lambda))$ we conclude $T((d_i)_i) = \sum_{i = 1}^{\infty} d_i g_i.$ Conversely, if $T$ is a well defined and continuous map, then its transposed $T':E\to \Lambda$ is also continuous which means that $( g_i )_i$ is a $\Lambda$-Bessel sequence. (i2) is proved similarly considering that the dual $(\Lambda', \beta(\Lambda', \Lambda))$ is a sequence space.
\par
(ii) If $( g_i )_i$ is a $\Lambda$-frame then $U$ is a topological isomorphism into, hence $T = U'$ is surjective. In particular $( g_i )_i$ is a $\Lambda'$-representing system in $(E', \mu(E', E)).$
\par
(iii) From (i), the map $T: (\Lambda', \mu(\Lambda',\Lambda)) \to (E', \mu(E', E)),\ (d_i)_i \mapsto \sum_{i = 1}^{\infty} d_i g_i,$ is well defined, continuous and surjective. Consequently $T':(E,\sigma(E,E'))\to (\Lambda, \sigma(\Lambda, \Lambda'))$ is an isomorphism into \cite[9.6.1]{jarchow}, hence $( g_i )_i$ is a $(\Lambda, \sigma(\Lambda, \Lambda'))$-frame for $(E,\sigma(E,E')).$
\par
(iv) Necessity follows from (ii) and sufficiency follows from the closed range theorem \cite[9.6.3]{jarchow} and (iii).
\end{proof}

Relevant consequences of Proposition \ref{frames-representing} for spaces of analytic functions are given later in Theorem \ref{abanin_khoi}, that is due to Abanin and Khoi \cite{abanin_khoi}, and Corollary \ref{cor:rep-Ap}.

The next result is the extension in our context of \cite[Corollary 3.3]{casazza_jmaa}.

\begin{proposition} Let $E$ be a reflexive space and let $\Lambda$ be a reflexive sequence space for which the canonical unit vectors $( e_i)_i$ form a Schauder basis. If either
\begin{itemize}
 \item[(i)] $E$ and $\Lambda$ are Fr\'echet spaces \par or
\item[(ii)] $E$ is the strong dual of a Fr\'echet Montel space and $\Lambda$ is an $\left(LB\right)$-space,
\end{itemize} then $( g_i )_i\subset E'$ is a $\Lambda$-frame for $E$ if, and only if, $( g_i )_i$ is $\Lambda$-Bessel and a $\Lambda'$-representing system for $(E', \beta(E',E)).$
\end{proposition}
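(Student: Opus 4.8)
The plan is to deduce the statement from Proposition~\ref{frames-representing} by using reflexivity to collapse the two relevant topologies on $E'$. The starting remark is that in both cases $E$ is reflexive, hence semireflexive, so every bounded closed absolutely convex subset of $E$ is $\sigma(E,E')$-compact; therefore $\mu(E',E)=\beta(E',E)$. This means the two requirements---being a $\Lambda'$-representing system for $(E',\mu(E',E))$, or for $(E',\beta(E',E))$---coincide, which is exactly what lets me transport Proposition~\ref{frames-representing}, phrased with the Mackey topology, to the present strong-dual setting. Moreover reflexive spaces are barrelled and $(e_i)_i$ is assumed to be a Schauder basis of $\Lambda$, so every part of Proposition~\ref{frames-representing} is at my disposal. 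The necessity direction is then immediate: a $\Lambda$-frame is $\Lambda$-Bessel by Definition~\ref{FR_Pre_Def_Fr}, and by Proposition~\ref{frames-representing}(ii) it is a $\Lambda'$-representing system for $(E',\mu(E',E))=(E',\beta(E',E))$.

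For sufficiency I separate the two cases. In case~(i) there is nothing to add: with $E,\Lambda$ Fréchet and $\mu(E',E)=\beta(E',E)$, the hypothesis is exactly the one in Proposition~\ref{frames-representing}(iv), which I quote directly.

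The substance is case~(ii), where $E$ and $\Lambda$ fail to be Fréchet and (iv) is unavailable. Here I would pass to the strong duals, which \emph{are} Fréchet, and run an open-mapping argument on the transpose. First I set up the identifications. Since $\Lambda$ is a reflexive $(LB)$-space it is in particular a $(DF)$-space, so $Y:=(\Lambda',\beta(\Lambda',\Lambda))$ is a Fréchet space (the strong dual of a $(DF)$-space is Fréchet), and reflexivity of $\Lambda$ gives $Y'_\beta=\Lambda$; since $X$ is Fréchet--Montel (hence reflexive) and $E=X'_\beta$, the space $(E',\beta(E',E))$ equals $X$, again Fréchet. As $\Lambda$ and $X$ are semireflexive, the Mackey and strong topologies agree on $\Lambda'$ and on $E'$, so the map of Proposition~\ref{frames-representing}(i1), namely $T=U':(d_i)_i\mapsto\sum_i d_ig_i$, is a continuous linear operator $T:Y\to X$ between Fréchet spaces; the representing-system hypothesis says exactly that $T$ is onto. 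Now the open mapping theorem makes $T$ a topological homomorphism, and since its range is all of $X$, the closed range theorem \cite[9.6.3]{jarchow} yields that the transpose $T':X'_\beta\to Y'_\beta$ is a topological isomorphism into. Reading this back through the reflexivity identifications $X'_\beta=E$, $Y'_\beta=\Lambda$ and $T'=U''=U$, I obtain that the analysis operator $U:E\to\Lambda$ is an isomorphism into, i.e. $(g_i)_i$ is a $\Lambda$-frame.

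I expect the only delicate point to be the bookkeeping of duals in case~(ii): one must check that $X'_\beta=E$ and $Y'_\beta=\Lambda$ hold \emph{topologically} (this is precisely reflexivity of $X$ and of $\Lambda$), that $\Lambda'_\beta$ is genuinely Fréchet (this uses that an $(LB)$-space is a $(DF)$-space and that the strong dual of a $(DF)$-space is Fréchet), and that the bitranspose of $U$ restricts to $U$ under these identifications, so that the closed-range conclusion about $T'$ really is a statement about the original analysis operator. Once the framework is in place the analysis is light: the only nontrivial inputs are the semireflexivity identity $\mu(E',E)=\beta(E',E)$ together with the open mapping and closed range theorems for Fréchet spaces, the latter applied not to $U$ but to its transpose.
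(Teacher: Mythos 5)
Your reduction of case (ii) to a statement about $T=U':\Lambda'_\beta\to E'_\beta$ is exactly the paper's: necessity and case (i) are handled the same way (via $\mu=\beta$ on the duals of the reflexive spaces involved), and the identifications $X=(E',\beta(E',E))$, $Y=(\Lambda',\beta(\Lambda',\Lambda))$, with $T:Y\to X$ continuous and onto, are all correct. The gap is the final step: you claim that the open mapping theorem plus the closed range theorem \cite[9.6.3]{jarchow} yield that $T':X'_\beta\to Y'_\beta$ is a topological isomorphism into. The closed range theorem only gives conclusions for the \emph{weak} topologies (that $T'$ is injective, has $\sigma(Y',Y)$-closed range, and is a homomorphism for $\sigma(X',X)$ and $\sigma(Y',Y)$); it says nothing about the strong duals. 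Indeed, for bounded $C\subset Y$ one has $q_C(T'x')=\sup_{y\in C}|x'(Ty)|=p_{T(C)}(x')$, so $T'$ is an isomorphism into for the strong topologies if and only if every bounded $B\subset X$ is absorbed by the closed absolutely convex hull of some $T(C)$, $C\subset Y$ bounded --- that is, if and only if $T$ lifts bounded sets (up to closures and constants). A continuous linear surjection of Fr\'echet spaces need \emph{not} have this property: in K\"othe's classical example of a quotient map $T$ from a Fr\'echet--Montel space $Y$ onto the Banach space $\ell_1$, the map $T$ is open and has closed range, yet $T':\ell_\infty\to Y'_\beta$ cannot be an embedding, because $Y'_\beta$ is a Montel $(DF)$-space, so the image of the unit ball of $\ell_\infty$ would be relatively compact and (the range of $T'$ being closed, as it is weak*-closed) the unit ball of $\ell_\infty$ would be norm compact. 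So the implication you invoke is false in general, and your proof never uses the Montel hypothesis at the decisive point --- you use it only to get reflexivity of $X$, which is not what is needed.

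The repair is precisely the paper's argument. Since $X=E'_\beta$ is Fr\'echet--Montel, every bounded set $B\subset X$ is relatively compact; and a continuous linear surjection between Fr\'echet spaces lifts compact sets (lift a null sequence whose closed absolutely convex hull contains $\overline{B}$, using the open mapping theorem, and take the closed absolutely convex hull of the lifted null sequence). Hence there is a bounded set $C\subset\Lambda'_\beta$ with $B\subset T(C)$, and then
$$
p_B(x')\leq p_{T(C)}(x') = q_C(T'x') \quad \mbox{for all } x'\in X',
$$
which shows directly that $U=T'$ is a topological isomorphism into, i.e.\ $(g_i)_i$ is a $\Lambda$-frame. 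In short: replace the appeal to the closed range theorem by the bounded-set-lifting property of $T$; that lifting property is exactly where the Fr\'echet--Montel assumption in case (ii) enters, and without it the statement you tried to prove by soft duality is simply not true.
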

\begin{proof} The case (i) is Proposition \ref{frames-representing} (iv). Only the sufficiency in case (ii) has to be proved. Let us assume that $( g_i )_i$ is $\Lambda$-Bessel and a $\Lambda'$-representing system for $(E', \beta(E',E))$ and consider the continuous map $U:E\to \Lambda,$ $U(x) = (g_i(x))_i.$ Then $T = U':\Lambda'\to E'$ is a well-defined continuous and surjective map. Since $E'$ is a Fr\'echet Montel space, the map $T$ lifts bounded sets, that is, for every bounded set $B$ in $E'$ we can find a bounded set $C$ in $\Lambda'$ such that $B\subset T(C).$ Hence $U:E\to \Lambda$ is a topological isomorphism into, which means that $( g_i )_i$ is a $\Lambda$-frame for $E.$
\end{proof}

\begin{example}\label{ex:repsystemFM}{\rm Let $F$ be the strong dual of a Fr\'echet Montel space $E.$ Since $E$ is separable it admits a representing system $(g_i)_i\subset F'.$ Consider the Fr\'echet sequence space
$$
\Lambda = \left\{(\alpha_i)_i\in \omega: \, \sum_i\alpha_i g_i\ \mbox{is convergent in}\ E\right\}
$$ endowed with the system of seminorms as in (\ref{eq_AD_Lem_DefiSeqSpace_Semi}). Then $(g_i)_i$ is a $\Lambda$-representing system for $E$ and also a $(\Lambda', \beta(\Lambda',\Lambda))$-frame for $F.$
}
\end{example}
In fact, the continuous map $T:\Lambda \to E,$ $(\alpha_i)_i\mapsto \sum_i\alpha_i g_i,$ lifts bounded sets, which implies that $U = T':E\to \Lambda'$ is an isomorphism into. $\Box$

\begin{example}\label{ex:repsystemDFS}{\rm Let $E$ be a Fr\'echet-Schwartz space. Then there are a  Fr\'echet sequence space $\Lambda$ and a sequence $(g_j)_j\subset E'$ which is a $\Lambda$-frame for $E.$
 }
\end{example}
In fact, $F:= E'_\beta = {\rm ind}_k F_k$ is a sequentially retractive $\left(LB\right)$-space, hence it is sequentially separable and admits a representing system $(g_j)_j$ (\cite[Theorem 1]{Kadets-Korobeinik-Studia}). Now, for each $k$ we put
$$
\Gamma_k:=\left\{ \alpha \in \omega \, : \alpha_jg_j\in F_k\ \mbox{for all}\ j\ \mbox{and}\ \sum_{j=1}^{\infty}\alpha_j g_j \mbox{ converges in } F_k \right\}.$$

Without loss of generality we may assume that $\Gamma_k$ is non-trivial for each $k$ and we endow it with the norm $$ q_k(\alpha)=\mbox{ sup}_n||\sum_{j=1}^{n}\alpha_j g_j ||_k,$$ where $\norm{\cdot}_k$ denotes the norm of the Banach space $F_k.$ Then $(\Gamma_k, q_k),$ $k\in {\mathbb N},$ is an increasing sequence of Banach spaces with continuous inclusions and $\bigcup_k \Gamma_k$ coincides algebraically with $$\Gamma:=\left\{\alpha \in \omega \, : \, \sum_{j=1}^{\infty}\alpha_j g_j \mbox{ converges in } F \right\}.$$ Hence $\Gamma,$ endowed with its natural $\left(LB\right)$-topology, is a sequence space with the property that the canonical unit vectors are a basis. Moreover, the map $$\Gamma \to F, \, \alpha \to \sum_{j=1}^{\infty}\alpha_j g_j $$ is well defined, continuous and surjective. Therefore, $\Lambda = \Gamma'$ is a Fr\'echet sequence space and $(g_j)_j$ is a $\Lambda$-frame for $E.$ $\Box$
\par\medskip
The following result is the version in the locally convex context of Propositions 2.2 and 3.4 of \cite{casazza_jmaa}. It relates $\Lambda$-Bessel sequences with frames with respect to $\Lambda$ when $\Lambda$ is a barrelled sequence space for which the canonical unit vectors $( e_i)_i$ form a Schauder basis. Note that, if $( g_i )_i \subset E'$ is a frame with respect to $\Lambda$, by definition, $U(E)$ is complemented in $\Lambda$; this means that the operator $U^{-1}: U(E) \to E$ can be extended to a continuous linear operator $S:\Lambda \to E$.

\begin{proposition}\label{frame-repsystem}
Let $E$ be a barrelled and complete lcs and let $\Lambda$ be a barrelled sequence space
for which the canonical unit vectors $( e_i)_i$ form a Schauder basis. If $( g_i )_i \subset E'$ is $\Lambda$-Bessel for $E$ then the following conditions are equivalent:
\begin{enumerate}
\item[(i)] $( g_i )_i \subset E'$ is a frame with respect to $\Lambda$.
\item[(ii)] There exists a sequence $( f_i )_i \subset E$, such that $\sum_{ i = 1}^{\infty} c_i f_i$ is convergent for every $(c_i )_i \in \Lambda$ and $x = \sum_{ i = 1}^{\infty} g_i\left(x\right)f_i$ , for every $x \in E$.
\item[(iii)] There exists a $\Lambda'$-Bessel sequence $( f_i )_i \subset E \subseteq E'' $ for $E'$ such that $x = \sum_{i = 1}^{\infty} g_i\left(x\right)f_i$ for every $x \in E$.
\end{enumerate}
If the canonical unit vectors form a basis for both $\Lambda$ and $\Lambda_{\beta}'$, (i)-(iii) are also equivalent to
\begin{enumerate}
\item[(iv)] There exists a $\Lambda'$-Bessel sequence $( f_i )_i \subset E \subseteq E'' $ for $E'$ such that $x' = \sum_{i = 1}^{\infty} x'\left(f_i\right) g_i$ for every $x' \in E'$ with  convergence in the strong topology.
\end{enumerate}
If each of the cases (iii) and (iv) hold then $( f_i )_i$ is actually a frame for $E'$ with respect to $\Lambda'$. Moreover, $((g_i)_i,(f_i)_i)$ is a shrinking Schauder frame.
\end{proposition}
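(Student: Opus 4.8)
The plan is to read off both assertions from the transpose relations among the operators already supplied by the equivalence of (i)--(iii). Since those cases are equivalent, $(g_i)_i$ is a frame for $E$ with respect to $\Lambda$, so the analysis operator $U:E\to\Lambda$, $U(x)=(g_i(x))_i$, is an isomorphism into with complemented range, and by (ii) the synthesis operator $S:\Lambda\to E$, $S((c_i)_i)=\sum_i c_i f_i$, is well defined, continuous and satisfies $S\circ U=id_E$. First I would identify the analysis operator of $(f_i)_i\subset E\subseteq E''$ acting on $E'$, namely $V:E'\to\Lambda'$, $V(x')=(x'(f_i))_i$, with the transpose $S'$: using the convergence of $\sum_i c_i f_i$ in $E$ one computes $S'(x')(c)=x'(\sum_i c_i f_i)=\sum_i c_i\,x'(f_i)$, so that $V=S'$, and in particular $V$ is continuous for the strong duals (this is just the $\Lambda'$-Bessel property in (iii)). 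Transposing $S\circ U=id_E$ then yields the identity $U'\circ V=(S\circ U)'=id_{E'}$, where $U':\Lambda'\to E'$ is continuous.

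From here the first conclusion is immediate: $V$ is continuous and injective and admits the continuous left inverse $U'$, hence $V$ is an isomorphism into, that is, a topological isomorphism of $E'$ onto its range $V(E')$. Moreover $P:=V\circ U':\Lambda'\to\Lambda'$ is idempotent, since $P^2=V\circ(U'\circ V)\circ U'=V\circ U'=P$, and its range is exactly $V(E')$, because $P(\Lambda')\subseteq V(E')$ while $P$ fixes $V(E')$ pointwise. Thus $V(E')$ is the image of a continuous projection of $\Lambda'$, i.e. it is complemented, and therefore $(f_i)_i$ is a frame for $E'$ with respect to $\Lambda'$; here only (iii) is used.

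For the last assertion, the reconstruction formula $x=\sum_i g_i(x)f_i$ for $x\in E$ from (iii), with convergence in $E$, is exactly the statement that $((g_i)_i,(f_i)_i)$ is a Schauder frame of $E$. To see that it is shrinking I would invoke (iv): the expansion $x'=\sum_i x'(f_i)g_i$ converges in the strong topology of $E'$, which says precisely that the dual system $((f_i)_i,(g_i)_i)$ is a Schauder frame for $E'_\beta$, the defining shrinking condition. This same expansion is also visible from the operator level, since the unit vectors being a basis of $\Lambda'_\beta$ realises $U'$ as the synthesis map $(d_i)_i\mapsto\sum_i d_i g_i$, so that evaluating $U'\circ V=id_{E'}$ at $x'$ reproduces it. Hence $((g_i)_i,(f_i)_i)$ is a shrinking Schauder frame. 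The one delicate point---and the main obstacle---is the topological bookkeeping: one must check that $V=S'$ and $U'$ are continuous between the strong duals, that the basis of $\Lambda'_\beta$ genuinely realises $U'$ as the synthesis operator, and above all that $x'=\sum_i x'(f_i)g_i$ converges strongly rather than merely weak-$*$, since shrinkingness is a statement about the strong topology.
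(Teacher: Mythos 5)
Your argument addresses only the last two sentences of the proposition, namely that $(f_i)_i$ is a frame for $E'$ with respect to $\Lambda'$ and that $((g_i)_i,(f_i)_i)$ is a shrinking Schauder frame. The core of the statement is the chain of equivalences (i) $\Leftrightarrow$ (ii) $\Leftrightarrow$ (iii) and, under the extra hypothesis on $\Lambda_\beta'$, their equivalence with (iv); you take all of this as ``already supplied'' and never prove it, so the proposal is genuinely incomplete rather than merely terse. Concretely, what is missing: (ii) $\Rightarrow$ (i) needs the Banach--Steinhaus theorem (here is where the barrelledness of $\Lambda$ enters) to make the synthesis map $S((c_i)_i)=\sum_i c_i f_i$ continuous; (i) $\Rightarrow$ (ii) uses that the unit vectors are a Schauder basis of $\Lambda$ to get $S((c_i)_i)=\sum_i c_i S(e_i)$; (iii) $\Rightarrow$ (ii) rests on Proposition \ref{frames-representing} (i2); and the equivalence (iii) $\Leftrightarrow$ (iv) is exactly the ``delicate point'' you flag in your final sentence but never carry out: one must show that the expansion $x'=\sum_i x'(f_i)g_i$ converges in $\beta(E',E)$, which the paper does by estimating $p_B\bigl(x'-\sum_{i=1}^n x'(f_i)g_i\bigr)$ over bounded sets $B\subset E$, rewriting the tail as $q_C\bigl(\sum_{i=n+1}^\infty x'(f_i)z_i\bigr)$ with $C=U(B)$ bounded in $\Lambda$, and using that the unit vectors $(z_i)_i$ form a basis of $\Lambda_\beta'$ (and a symmetric computation for (iv) $\Rightarrow$ (iii)). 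In your write-up, (iv) only ever appears as a hypothesis, so this work is nowhere done.

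For the part you do treat, your argument is correct and in fact more explicit than the paper's. The paper concludes by observing that (iii) and (iv) mean that $((g_i)_i,(f_i)_i)$ and $((f_i)_i,(g_i)_i)$ are Schauder frames of $E$ and of $E_\beta'$ respectively, and then cites \cite[Proposition 2.3]{bfgr}; it leaves the complementedness of the range of the analysis operator of $(f_i)_i$ implicit. Your route --- identifying $V=S'$, transposing $S\circ U=\mathrm{id}_E$ to get $U'\circ V=\mathrm{id}_{E'}$, and exhibiting the continuous projection $P=V\circ U'$ of $\Lambda_\beta'$ onto $V(E')$ --- is a clean, self-contained proof of that frame claim. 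One caveat: ``shrinking'' is a defined term in \cite{bfgr}; the property you call ``the defining shrinking condition'' (that $((f_i)_i,(g_i)_i)$ is a Schauder frame for $E_\beta'$) is a characterization proved there (Proposition 2.3, which is what this paper invokes), so you should cite that equivalence rather than adopt it as the definition.
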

\begin{proof}
We consider $U$ as in Definition \ref{FR_Pre_Def_Fr} which is a continuous map.
\begin{description}
\item[(i) $\to$ (ii)] Let $S:\Lambda \to E$ be a continuous linear extension of $U^{-1}$ such that $S \circ U = \left. I \right|_{E}$. Define $f_i := S\left(e_i\right)$ and observe that, for all $( c_i )_i \in \Lambda,$
$$ \sum_{ i = 1}^{\infty} c_i f_i = \sum_{i = 1}^{\infty} c_i S\left(e_i\right) =  S\left( \sum_{i = 1}^{\infty} c_i e_i \right) = S\left((c_i)_i\right).$$
Moreover, for every $x \in E$,  $x = (S\circ U)\left(x\right) = \sum_{ i = 1}^{\infty} g_i\left(x\right)f_i $.

\item[ (ii) $\to$ (i)] Assume that (ii) is satisfied, we define $S: \Lambda \to E$ by $S\left(( c_i )_i \right) := \sum_{i = 1}^{\infty} c_i f_i $ with $( c_i )_i$. Observe that, by Banach-Steinhaus theorem, $S$ is a continuous operator. Taking $( g_i \left(x\right) )_i \in U(E)$ we obtain
$$ S\left(( g_i\left(x\right))_i\right) = \sum_{ i = 1}^{\infty} g_i\left(x\right) f_i = x.$$
We obtain that $U$ is an isomorphism into and $S$ is a continuous extension of $U^{-1}.$ Hence (i) holds.

\item[ (ii) $\to$ (iii)] Let $V: \Lambda \to E$ be a linear continuous extension of $U_{( g_i )_i}^{-1}$. Set $f_i := V\left(e_i\right)$. By Lemma \ref{FR_Pre_Lem_DualS}, for every $ x' \in E'$ we have $( x'\left(f_i\right))_i = ( x'\left(V\left(e_i\right)\right))_i \in \Lambda'$ and $\left( f_i \right)_i$, considered as a sequence in $E''$, is an $\Lambda'$-Bessel sequence for $E'$. Note that we can also prove the result using that $S': E_{\beta}' \to \Lambda'$, given by $S'(x') := (x'(f_i))_i$ is continuous since it is the transpose of $S$.

\item[(iii) $\to$ (ii)] By Proposition \ref{frames-representing} (i2), if (iii) is valid then the operator $T: \Lambda' \to E \subset E''$ given by $T((c_i)_i) := \sum_{i = 1}^{\infty} c_i f_i $ is well defined and continuous, hence (ii) is satisfied.

Now assume that the canonical unit vectors form a basis for both $\Lambda$ and $\Lambda_{\beta}'.$ Denote the canonical basis of $\Lambda$ by $(e_i)_i$ and the canonical basis of $\Lambda_{\beta}'$ by $(z_i)_i$.

\item[(iii) $\to$ (iv)]  If (iii) is valid, there exists $( f_i)_i \subset E \subseteq E''$ that is $\Lambda'$-Bessel for $E'$ such that $x = \sum_{ i = 1}^{\infty} g_i\left(x\right) f_i$. Observe that, as $( x'\left( f_i \right) )_i$ belongs to $\Lambda'$, then $( x'\left( f_i \right) )_i = \sum_{i = 1}^{\infty} x'\left(f_i\right) z_i$ in $(\Lambda', \beta(\Lambda',\Lambda))$. Given
a bounded set $B \in E$ then $C = \{ (g_i\left(x\right)): x \in B \}$ is a bounded set in $\Lambda$.
If $p_B\in cs(E_{\beta}')$ is the continuous seminorm defined by $p_B (u') := \sup_{x \in B} |u'(x)|$ then

\begin{eqnarray*}
p_B\left( x' - \sum_{ i = 1}^n x'\left(f_i\right)g_i\right) & = & \sup_{ x \in B} \left| x'\left(x\right) - \sum_{ i = 1}^n x'\left(f_i\right)g_i\left(x\right) \right| = \nonumber \\
& = & \sup_{ x \in B} \left| x'\left( \sum_{i = 1}^{\infty} g_i\left(x\right)f_i\right)- \sum_{ i = 1}^n x'\left(f_i\right)g_i\left(x\right) \right| = \nonumber \\
& = & \sup_{ x \in B} \left| \sum_{ i = n + 1}^{\infty} x'\left(f_i\right)g_i\left(x\right)\right| = \sup_{\phi \in C} \left| \phi\left(\sum_{ i = n + 1}^{\infty} x'\left(f_i\right)z_i\right)\right| \nonumber \\
& = & q_C\left( \sum_{ i = n+1}^{\infty} x'\left(f_i\right)z_i \right)
\end{eqnarray*}
where $q_C\in cs(\Lambda')$ is given by $q_C\left(\alpha\right) := \sup_{\phi\in C}\left|\phi(\alpha)\right|$ for every $\alpha \in \Lambda_{\beta}'.$ Then, $q_C\left( \sum_{ i = n+1}^{\infty} x'\left(f_i\right)z_i \right)$ converges to 0 as $n$ converges to infinity since $( x'(f_i) )_i = \sum_{n = 1}^{\infty} x'(f_i) z_i$ in $\Lambda_{\beta}'.$

\item[(iv) $\to$ (iii)] If (iv) is valid, then there exists $( f_i )_i $ a $\Lambda'$-Bessel sequence for $E'$ such that $x' = \sum_{ i = 1}^{\infty} x'\left(f_i\right) g_i$.
Given a bounded subset $B'\subset E'$ then $C' = \{ (x'\left(f_i\right)) : x' \in B' \}$ is a bounded set in $\Lambda_{\beta}'$.
If $p_{B'}\in cs(E)$ is the continuous seminorm defined by $p_{B'} (x) := \sup_{x' \in B'} |x'(x)|$ then
\begin{eqnarray*}
p_{B'}\left( x - \sum_{ i = 1}^n g_i\left(x\right) f_i \right) & = & \sup_{ x' \in B'} \left| x'\left(x\right) - \sum_{ i = 1}^n x'\left(f_i\right)g_i\left(x\right) \right| = \nonumber \\
& = & \sup_{ x' \in B'} \left| \sum_{ i = n + 1}^{\infty} x'\left(f_i\right)g_i\left(x\right)\right| = \sup_{\phi' \in C'} \left| \phi'\left(\sum_{ i = n + 1}^{\infty} g_i\left(x\right)e_i\right)\right| \nonumber \\
& = &  q\left( \sum_{ i = n+1}^{\infty} g_i\left(x\right)e_i \right)
\end{eqnarray*}
where $q$ is a continuous seminorm in $\bigwedge_{\beta}$. Then, $q\left( \sum_{ i = n+1}^{\infty} g_i\left(x\right)e_i \right)$ converges to 0 as $n$ converges to infinity due to the fact that $(g_i(x))_i = \sum_{i = 1}^{\infty} g_i(x) e_i$ in $\Lambda$.
\end{description}

To conclude, observe that, if (iii) and (iv) hold, then $((g_i)_i, (f_i)_i)$ and $((f_i)_i, (g_i)_i)$ are Schauder frames for $E$ and $E'$ respectively. By \cite[Proposition 2.3]{bfgr} we obtain that $((g_i)_i, (f_i)_i)$ is a shrinking Schauder frame.
\end{proof}

A locally convex algebra is a lcs which is an algebra with separately continuous multiplication. The spectrum of the algebra is the set of all non-zero multiplicative linear functionals. The following remark will be useful in Section \ref{sec:hormander}.
\begin{remark}\label{remark:algebras}\rm{\begin{itemize}
\item[(i)] In many cases $E$ is continuously included in a locally convex algebra $\mathcal{A}$ with non-empty spectrum, $\Lambda$ is a solid sequence space,  $(g_i)_i$ is a $\Lambda$-frame and every $g_i$ is the restriction to $E$ of a continuous linear multiplicative functional  on $\mathcal{A}.$ Let us assume that for some  $a\in \mathcal{A}$ the operator $$T:E \rightarrow E (\subset \mathcal{A}),\ \ x\mapsto  ax$$ is well defined and it is a topological isomorphism into, and that $\alpha:=(g_i(a))_i$ defines by pointwise multiplication a continuous operator on $\Lambda.$ Then, $(h_i)_i,$ where $$h_i:=\left\{\begin{array}{lcr} g_i, & \mbox{ if } & g_i(a)\neq 0 \\  0 & \mbox{ if } & g_i(a)=0 \end{array}\right.$$ is a $\Lambda$-frame. In fact, since $U\circ T$ is a topological isomorphism into then for every continuous seminorm $p$ on $E$ there is a continuous seminorm $q$ on $\Lambda$ such that
$$
p(x)\leq q\left((g_i(ax))_i\right) = q\left((g_i(a)g_i(x))_i\right) = q\left((g_i(a)h_i(x))_i\right).
$$ Finally, since the pointwise multiplication with $(g_i(a))_i$ is a continuous operator on $\Lambda$ we find a continuous seminorm $r$ on $\Lambda$ with
$$
p(x) \leq r\left((h_i(x))_i\right),\ \ x\in E.
$$

    \item[(ii)] If $E$ is a locally convex algebra with non-empty spectrum, $\Lambda$ is a barrelled  sequence space and $(g_i)_i$ is a $\Lambda$-frame consisting of  continuous linear multiplicative functionals  on $E,$ then $U(E)$ is a locally convex algebra under pointwise multiplication. Hence, if $E$ has no zero-divisors, the analysis map $U$ cannot be surjective. In fact, if there are $x,y\in E$ such that $U(x) = e_1$ and $U(y) = e_2$ then $$U(x\cdot y) = (g_i(xy))_i = (g_i(x)g_i(y))_i = 0$$ and the injectivity of $U$ implies $x\cdot y = 0,$ which is a contradiction. Since the range of $U$ is a topological subspace of $\Lambda,$ the non-surjectivity of $U$ implies the non-injectivity of the transposed map $U'.$ Consequently the expression of any element in $E'$ as a convergent series $$\sum_i \alpha_i g_i$$ with $\alpha \in \Lambda'$ is never unique.
        \end{itemize}}
        \end{remark}
\par\medskip
\section{$\Lambda$-frames in $\left(LB\right)$-spaces}\label{sec:LB}

Let $E={\rm ind}_n(E_n, \norm{\cdot}_n)$ and $\Lambda={\rm ind}_n(\Lambda_n, r_n)$ be complete $\left(LB\right)$-spaces and $(g_i)_i\subset E'$ a $\Lambda$-Bessel sequence. Let $U:E\to \Lambda$ be the continuous and linear map of Definition \ref{FR_Pre_Def_Fr} and, for each $n\in {\mathbb N},$ consider the seminormed space $(F_n, q_n)$ where
$$
F_n=\{x\in E: U(x)\in \Lambda_n\}
$$ and $q_n(x):=r_n(U(x)).$ Let us consider the topologies on $E$
$$
(E,\tau_1) = {\rm ind}_n(E_n, \norm{\cdot}_n),\ (E,\tau_2) = {\rm ind}_n(F_n,q_n).
$$ Finally, denote by $\tau_3$ the topology on $E$ given by the system of seminorms $x\mapsto p(U(x)),$ when $p$ runs in $cs(\Lambda).$ Then
$$
\tau_1 \geq \tau_2 \geq \tau_3,
$$ but observe that $\tau_2$ and $\tau_3$ need not be even Hausdorff. This notation will be kept through all this section.

\par\medskip
We observe that \textit{$(g_i)_i\subset E'$ is a $\Lambda$-frame if, and only if, the former three topologies coincide.}
\par\medskip
The coincidence $\tau_1=\tau_2$ is easily characterized under the mild additional assumption that the closed unit ball of $\Lambda_n$ is also closed in $\omega.$ This is the case for all (weighted) $\ell_p$ spaces, $1\leq p \leq \infty,$ but not for $c_0.$

Applications of the results in this section for weakly sufficient sets will be given in Section \ref{Examples}. We refer to \cite[8.5]{PCB} for the behavior of bounded sets and convergent sequences in $\left(LB\right)$-spaces.

\begin{proposition}\label{frames_LB} Assume that the closed unit ball of $\Lambda_n$ is closed in $\omega.$ Then, $\tau_1=\tau_2$ if and only if $(F_n,q_n)$ is a Banach space for each $n.$
\end{proposition}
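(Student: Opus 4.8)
The plan is to prove both implications, always using the standing inequality $\tau_1 \geq \tau_2$ and reducing each direction to a statement about the gauge unit ball
$$
B_n := \{x \in E : q_n(x) \leq 1\} = U^{-1}\bigl(\{w \in \Lambda_n : r_n(w) \leq 1\}\bigr)
$$
of the (semi)normed space $(F_n,q_n)$. In both directions one first records that the operative hypothesis forces $q_n$ to be a genuine norm: since $\ker U \subset F_n$ and $q_n$ vanishes on $\ker U$, whenever $B_n$ is bounded (which will be the case in each direction) $\ker U$ is a bounded subspace of the Hausdorff space $E$, hence trivial, and $U$ is injective. Thus $(F_n,q_n)$ is automatically a normed space in the relevant situations.

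For the implication \emph{$(F_n,q_n)$ Banach for all $n$ $\Rightarrow$ $\tau_1 = \tau_2$}, I would show that the inclusion $j_n:(F_n,q_n)\to (E,\tau_1)$ is continuous for each $n$; by the universal property of the inductive limit $\tau_2$ this yields $\tau_1 \leq \tau_2$, hence equality. Since $(F_n,q_n)$ is a Banach space it is ultrabornological, while $(E,\tau_1)$, being a complete $(LB)$-space, is webbed, so by De Wilde's closed graph theorem it suffices to check that $j_n$ has closed graph. This is immediate from the continuity and injectivity of $U$: if $x_k \to x$ in $(F_n,q_n)$ and $x_k \to y$ in $(E,\tau_1)$, then $U(x_k)\to U(x)$ in $\Lambda_n$, hence in $\Lambda$, while $U(x_k)\to U(y)$ in $\Lambda$ by continuity of $U$, so $U(x)=U(y)$ and therefore $x=y$.

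For the converse \emph{$\tau_1=\tau_2 \Rightarrow (F_n,q_n)$ Banach}, the hypothesis that the closed unit ball of $\Lambda_n$ is $\omega$-closed becomes decisive. Regarding $U$ as a continuous map $(E,\tau_1)\to\omega$ (the inclusion $\Lambda\hookrightarrow\omega$ being continuous), the set $B_n=U^{-1}(\{r_n\leq 1\})$ is the preimage of an $\omega$-closed absolutely convex set, so $B_n$ is $\tau_1$-closed and absolutely convex. On the other hand $B_n$ is the unit ball of the step $(F_n,q_n)$ of the inductive limit $\tau_2$, hence $\tau_2$-bounded, and since $\tau_1=\tau_2$ it is $\tau_1$-bounded. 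Now $(E,\tau_1)$ is complete, so the $\tau_1$-closed subset $B_n$ is itself complete; a bounded, complete, absolutely convex disk is completant, and the associated normed space $E_{B_n}$ — which is exactly $(F_n,q_n)$, since $\mathrm{span}\,B_n=F_n$ and the Minkowski gauge of $B_n$ equals $q_n$ — is a Banach space.

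The main obstacle is this converse direction, precisely the passage from \emph{bounded} to \emph{Banach disk}: the equality $\tau_1=\tau_2$ only delivers $\tau_1$-boundedness of $B_n$, and completeness of $(F_n,q_n)$ is what the $\omega$-closedness buys by making $B_n$ closed in the ambient complete space, so that the completant-disk lemma applies. I would flag the two facts to be cited from a standard reference such as \cite{PCB} or \cite{Meise_Vogt_1997_Introduction}: De Wilde's closed graph theorem in the webbed/ultrabornological setting, and the lemma that a closed, bounded, absolutely convex subset of a complete space is a Banach disk, identifying $(E_{B_n},\gamma_{B_n})$ with $(F_n,q_n)$. (Alternatively, regularity of complete $(LB)$-spaces would let one locate $B_n$ inside a single Banach step $E_m$, where it is closed and bounded, and argue there; but invoking quasi-completeness of $(E,\tau_1)$ directly is cleaner.)
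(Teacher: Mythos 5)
Your proof is correct, but both halves take genuinely different routes from the paper's. For the implication $\tau_1=\tau_2 \Rightarrow (F_n,q_n)$ Banach, the paper argues directly with Cauchy sequences: a $q_n$-Cauchy sequence converges in the complete space $(E,\tau_1)$, its image under $U$ converges in $\omega$, and the $\omega$-closedness of the unit ball of $\Lambda_n$ is used to keep the limit inside $\Lambda_n$ with the estimate $r_n(U(x-x_j))\leq\varepsilon$. You package the same completeness phenomenon into the standard lemma that a bounded, complete, absolutely convex set is a Banach disk, using the $\omega$-closedness instead to make $B_n=U^{-1}(\{r_n\leq 1\})$ closed in $(E,\tau_1)$ and hence complete; this isolates cleanly where the hypothesis enters, at the cost of citing the Banach-disk lemma and the identification $(E_{B_n},\gamma_{B_n})=(F_n,q_n)$, both of which you verify correctly. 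For the converse, the paper invokes in one line the open mapping theorem for two comparable $(LB)$-topologies (which implicitly requires knowing $\tau_2$ is Hausdorff; this does follow, since each $q_n$ is a norm by hypothesis and so $U$ is injective), whereas your argument applies De Wilde's closed graph theorem to each step inclusion $(F_n,q_n)\to(E,\tau_1)$ and then uses the universal property of the inductive limit. Your version is slightly more careful in that it never needs to discuss whether $\tau_2$ is a Hausdorff $(LB)$-topology; the paper's is shorter and stays entirely within open-mapping technology.

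One inaccuracy to fix: in the direction \emph{each $(F_n,q_n)$ Banach $\Rightarrow \tau_1=\tau_2$} you justify injectivity of $U$ by saying $B_n$ ``will be bounded'' in $E$, but in that direction the $\tau_1$-boundedness of $B_n$ is not available a priori --- it is essentially equivalent to what is being proved. This is harmless: there injectivity is immediate, since $\ker U\subset F_n$ and $q_n$ is a norm by hypothesis, so $\ker U=\{0\}$. Your boundedness argument for injectivity is only needed, and is valid, in the other direction, where it parallels the paper's derivation of injectivity from the Hausdorffness of $\tau_2$.
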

\begin{proof} Assume that $\tau_1=\tau_2,$ which in particular implies that $\tau_2$ is Hausdorff.  Since $(F_n,q_n)$ is continuously injected in $(E, \tau_2),$ $q_n$ is a norm. Moreover, if $x\in E,$ $x\neq 0,$ there is $n$ such that $x\in F_n,$ hence $q_n(x)>0.$ We have $0<q_n(x)=r_n(U(x)),$ which implies $U(x)\neq 0.$ Thus $U$ is injective. Let $(x_j)_j$ be a Cauchy sequence in $(F_n, q_n).$ Then, it converges to a vector $x$ in the complete $\left(LB\right)$-space $E,$ and therefore its image under the analysis map $(U(x_j))_j$ is convergent to $U(x)$ in $\Lambda.$ Now, given $\varepsilon >0$ we can find $j_0$ such that $r_n(U(x_j)-U(x_k)) \leq \varepsilon$ whenever $j,\, k\geq j_0.$ That is, for $k\geq j_0,$
$$
U(x_k)\in U(x_j)+(\alpha \in \Lambda_n: \, r_n(\alpha)\leq \varepsilon),$$ and then

$$U(x)\in \overline{U(x_j)+(\alpha \in \Lambda_n: \, r_n(\alpha)\leq \varepsilon)}^{\ \omega}$$ for all $j\geq j_0.$ By hypothesis we get $U(x)\in \Lambda_n$ and $r_n(U(x-x_j))\leq \varepsilon$ for all $j\geq k_0.$ Hence $(F_n,q_n)$ is a Banach space.
\par
The converse holds since, by the open mapping theorem, two comparable $\left(LB\right)$-topologies must coincide.
\end{proof}

The next result depends on Grothendieck's factorization theorem  (see \cite[24.33]{Meise_Vogt_1997_Introduction}).
\begin{corollary}\label{cor:factorization} Assume that the closed unit ball of $\Lambda_n$ is closed in $\omega.$ Then $\tau_1=\tau_2$ if and only if for each $n$ there are $m$ and $C$ such that $F_n \subset E_m$ and $$||x||_m\leq C q_n(x)$$ for each $x\in F_n.$
\end{corollary}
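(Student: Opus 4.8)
The plan is to prove the two implications separately, exploiting the always-valid inequality $\tau_1 \geq \tau_2$ to reduce the equality $\tau_1 = \tau_2$ to the single continuity statement that the identity $(E,\tau_2) \to (E,\tau_1)$ is continuous, i.e. $\tau_2 \geq \tau_1$. Since $(E,\tau_2) = {\rm ind}_n(F_n,q_n)$ carries an inductive limit topology, this identity is continuous if and only if each inclusion $(F_n,q_n) \to (E,\tau_1)$ is continuous, by the universal property of inductive limits. This observation is the common backbone of both directions.

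For the sufficiency of the factorization condition (the implication $\Leftarrow$), suppose that for each $n$ there are $m$ and $C$ with $F_n \subseteq E_m$ and $\norm{x}_m \leq C\, q_n(x)$ for all $x \in F_n$. The inequality says precisely that the canonical inclusion $(F_n,q_n) \to (E_m,\norm{\cdot}_m)$ is continuous. Composing it with the canonical injection $(E_m,\norm{\cdot}_m) \to (E,\tau_1)$ of the $m$-th step into the inductive limit, which is continuous by definition, shows that $(F_n,q_n) \to (E,\tau_1)$ is continuous for every $n$. By the reduction above, $\tau_2 \geq \tau_1$, and together with $\tau_1 \geq \tau_2$ this yields $\tau_1 = \tau_2$. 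This direction needs no deep input.

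For the necessity (the implication $\Rightarrow$), I would first invoke Proposition \ref{frames_LB}: under the standing assumption that the closed unit ball of $\Lambda_n$ is closed in $\omega$, the hypothesis $\tau_1 = \tau_2$ guarantees that each $(F_n,q_n)$ is a Banach space. Next, since $\tau_2$ is the inductive limit topology, the inclusion $(F_n,q_n) \to (E,\tau_2)$ is continuous, and because $\tau_2 = \tau_1$ this is a continuous linear map from the Banach space $(F_n,q_n)$ into the $(LB)$-space $(E,\tau_1) = {\rm ind}_m(E_m,\norm{\cdot}_m)$. Grothendieck's factorization theorem \cite[24.33]{Meise_Vogt_1997_Introduction} then yields an index $m$ such that the image lies in $E_m$ and the inclusion factors continuously as $(F_n,q_n) \to (E_m,\norm{\cdot}_m)$. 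Unwinding continuity of this last map produces a constant $C$ with $\norm{x}_m \leq C\, q_n(x)$ for all $x \in F_n$, and in particular $F_n \subseteq E_m$, as required.

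The only delicate point, and the place where the hypothesis on the unit balls is indispensable, is the step in the necessity direction that certifies $(F_n,q_n)$ is a genuine Banach space rather than merely a seminormed space: Grothendieck's factorization theorem requires a Baire (here, Banach) domain, so without the completeness supplied by Proposition \ref{frames_LB} the argument would break down. Everything else is a routine chase through the universal property of inductive limits and the definition of the step norms.
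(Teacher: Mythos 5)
Your proof is correct and follows essentially the same route as the paper: the corollary is stated there as a consequence of Proposition \ref{frames_LB} (which supplies completeness of each step $(F_n,q_n)$ under the hypothesis on the unit balls of $\Lambda_n$) combined with Grothendieck's factorization theorem \cite[24.33]{Meise_Vogt_1997_Introduction}, exactly the two ingredients you use for the necessity, while the sufficiency is the same routine appeal to the universal property of the inductive limit topology $\tau_2$ together with $\tau_1\geq\tau_2$. You also correctly identify where the closedness hypothesis enters, namely only through Proposition \ref{frames_LB} to make the domain of the factorization a Banach space.
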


 \begin{proposition}\label{frames_LB_Montel} If $E$ is Montel and $\tau_1=\tau_2$ then $( g_i )_i $ is a $\Lambda$-frame for $E.$
     \end{proposition}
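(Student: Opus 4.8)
The plan is to use the italicized observation preceding Proposition \ref{frames_LB}: $(g_i)_i$ is a $\Lambda$-frame exactly when $\tau_1=\tau_2=\tau_3$. Since $\tau_1=\tau_2$ is assumed and $\tau_2\geq\tau_3$ always holds, it suffices to prove $\tau_2\leq\tau_3$, i.e. that the analysis operator $U$ is a topological isomorphism into $\Lambda$. First I would record two elementary facts. Because $\tau_1=\tau_2$ and $(E,\tau_1)$ is Hausdorff, $\tau_2$ is Hausdorff; hence if $U(x)=0$ then $q_n(x)=r_n(U(x))=0$ for every $n$ with $x\in F_n$, which forces $p(x)=0$ for every $\tau_2$-continuous seminorm $p$ and so $x=0$. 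Thus $U$ is injective, and by construction $q_n(x)=r_n(U(x))$, so $U$ maps each step $(F_n,q_n)$ isometrically into $(\Lambda_n,r_n)$.

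The natural route, mirroring the argument preceding Example \ref{ex:repsystemFM}, is dualization. Since $E$ and $\Lambda$ are $\left(LB\right)$-spaces, their strong duals $E'_\beta$ and $\Lambda'_\beta$ are Fr\'echet spaces, and because $E$ is Montel, $E'_\beta$ is even Fr\'echet--Montel. The transpose $U'=T:\Lambda'_\beta\to E'_\beta$ is thus a continuous linear map between Fr\'echet spaces, and the injectivity of $U$ gives that $T$ has $\sigma(E',E)$-dense, hence $\beta(E',E)$-dense, range. The key claim to establish is that $T$ is in fact surjective. Granting this, the open mapping theorem makes $T$ open, and since $E'_\beta$ is Montel -- so that its bounded sets are relatively compact -- $T$ lifts bounded sets: every bounded $B\subseteq E'_\beta$ lies in $T(C)$ for some bounded $C\subseteq\Lambda'_\beta$. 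By the duality lemma already invoked in the paper, a surjection that lifts bounded sets has transpose $T'=U$ (using reflexivity of the Montel space $E$) a topological isomorphism into, which is precisely the assertion that $(g_i)_i$ is a $\Lambda$-frame.

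The surjectivity of $T$ is the crux, and this is where $\tau_1=\tau_2$ and the Montel hypothesis must be combined. Unwinding, surjectivity of $T$ amounts to showing that every $x'\in E'$ extends, as the functional $\phi(U(x)):=x'(x)$ defined on $U(E)$, to a continuous functional on $\Lambda$ (Hahn--Banach then produces the desired $d\in\Lambda'$). Here $\tau_1=\tau_2$ delivers, for each $n$, a constant $C_n$ with $|x'(x)|\leq C_nr_n(U(x))$ on $F_n$, so that $\phi$ is bounded on every step and, by Hahn--Banach on $\Lambda_n$, extends to some $\tilde\phi_n\in\Lambda_n'$. The difficulty -- and I expect the main obstacle -- is to pass from these step-wise extensions (equivalently, from step-wise domination) to a single continuous functional on $\Lambda$ (equivalently, to domination by one seminorm of $\Lambda$): the $\tilde\phi_n$ need not be coherent across steps, and a coarser topology agreeing with $\tau_2$ merely on bounded sets need not coincide with it, so compactness alone does not close the gap. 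I would resolve this through the Fr\'echet--Montel structure of $E'_\beta$, using that the range of $T$ is dense together with the regularity the Montel property forces on $(E,\tau_2)={\rm ind}_n(F_n,q_n)$ -- whose bounded sets lie compactly in a single step and are carried by $U$ into a single $\Lambda_n$ -- to show that the range of $T$ is also closed, hence all of $E'_\beta$. Once closedness of the range is secured, surjectivity, and thereby the whole statement, follows.
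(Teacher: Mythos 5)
Your reduction is correct as far as it goes: injectivity of $U$, density of the range of $T=U'$ in $E'_\beta$ (where reflexivity of the Montel space $E$ is indeed needed to pass from weak*-density to strong density), and the final step that a surjective $T$ between the Fr\'echet spaces $\Lambda'_\beta$ and $E'_\beta$, with $E'_\beta$ Fr\'echet--Montel, lifts bounded sets and hence makes $U$ an isomorphism into (this is exactly the argument of the paper's proposition extending \cite[Corollary 3.3]{casazza_jmaa}, case (ii)). But the proof stalls exactly where you say it does, and what you offer there is not an argument. Surjectivity of $T$ means precisely that every $x'\in E'$ factors continuously through $U$, i.e.\ that every $\tau_1$-continuous functional is $\tau_3$-continuous; so it is essentially a reformulation of the statement to be proved, not a step towards it. The step-wise Hahn--Banach extensions $\tilde\phi_n\in\Lambda_n'$ cannot be glued --- that incoherence is the very phenomenon (inductive limit topology of a subspace versus the induced topology) the proposition is about --- and your proposed remedy, to show that the range of $T$ is closed, gains nothing: since the range is dense, closedness of the range \emph{is} surjectivity, so you have replaced the crux by an equivalent crux, and the sketch invoking ``the Fr\'echet--Montel structure of $E'_\beta$'' and ``regularity'' never produces the required single continuous seminorm on $\Lambda$ dominating $x'$ on $U(E)$.

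The missing ingredient, and what the paper actually uses, is Baernstein's lemma (\cite[8.3.55]{PCB}): for an injective continuous linear map $U$ from a Montel space $E$ into a complete $(LB)$-space $\Lambda$, if $U^{-1}(B)$ is bounded in $E$ for every bounded $B\subset\Lambda$, then $U$ is a topological isomorphism into. With this lemma the hypothesis $\tau_1=\tau_2$ is consumed in one line at the level of bounded sets --- where your parenthetical remark about bounded sets nearly was: $\Lambda$ is complete, hence a regular inductive limit, so a bounded $B\subset\Lambda$ is contained and bounded in some $\Lambda_n$; then $U^{-1}(B)\subset F_n$ is $q_n$-bounded because $q_n=r_n\circ U$ on $F_n$, hence bounded in $(E,\tau_2)=(E,\tau_1)$. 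Without Baernstein's lemma, or an equivalent device converting bounded-set information into a topological embedding (this is where the Montel property really enters), the duality route you chose has no mechanism for closing the gap.
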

    \begin{proof} As in Proposition \ref{frames_LB}, $U$ is injective and each $(F_n,q_n)$ is a normed space. By Baernstein's lemma (see \cite[8.3.55]{PCB}), as $E$ is a Montel space and $\Lambda$ is a complete $\left(LB\right)$-space, it suffices to show that for each bounded subset $B$ of $\Lambda$, the pre-image $U^{-1}(B)$ is bounded in $(E, \tau_2).$ As $\Lambda$ is regular, because it is complete, there is $n$ such that $B$ is contained and bounded in $\Lambda_n$, hence $U^{-1}(B)$ is contained and bounded in $F_n$, therefore bounded in $E.$\end{proof}

  Our next result is an abstract version of \cite[Theorems 2 and 3]{abanin}. Recall that a $\left(DFS\right)$-space is an $\left(LB\right)$-space $E={\rm ind}_n(E_n, \norm{\cdot}_n)$ such that for each $n$ there is $m>n$ such that the inclusion map $E_n \rightarrow E_m$ is compact.

    \begin{theorem}\label{frames_DFS} Let $E={\rm ind}_n(E_n, \norm{\cdot}_n)$ be a (DFS)-space and let $\Lambda={\rm ind}_n(\Lambda_n, r_n)$ be a complete $\left(LB\right)$-space. Assume that the closed unit ball of $\Lambda_n$ is closed in $\omega.$ If $(g_i)_i\subset E'$ is a $\Lambda$-Bessel sequence then the following conditions are equivalent:
    \begin{itemize}
     \item[(i)] $(g_i)_i$ is a $\Lambda$-frame,
     \item[(ii)] The map $U:E\to \Lambda,\ U(x) = ( g_i\left(x\right) )_i,$ is injective and for every $n\in {\mathbb N}$ there exists $m > n$ such that $F_n \subset E_m.$
    \end{itemize}
          \end{theorem}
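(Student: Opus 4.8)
The plan is to prove the two implications separately, exploiting that every (DFS)-space is Montel and reducing everything to the coincidence $\tau_1=\tau_2$, for which Proposition~\ref{frames_LB} and Corollary~\ref{cor:factorization} are already available (the closed-unit-ball hypothesis on $\Lambda_n$ is in force throughout).

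The implication (i)~$\Rightarrow$~(ii) is routine. If $(g_i)_i$ is a $\Lambda$-frame then, by the observation that being a $\Lambda$-frame is equivalent to $\tau_1=\tau_2=\tau_3$, the analysis operator $U$ is an isomorphism into, hence injective, and in particular $\tau_1=\tau_2$. Corollary~\ref{cor:factorization} then supplies, for each $n$, an index $m$ with $F_n\subset E_m$; replacing $m$ by any larger index (the inclusions $E_m\hookrightarrow E_{m'}$ are continuous) we may take $m>n$, which gives~(ii).

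For (ii)~$\Rightarrow$~(i) I would first use that a (DFS)-space is Montel, so by Proposition~\ref{frames_LB_Montel} it suffices to prove $\tau_1=\tau_2$; by Proposition~\ref{frames_LB} this amounts to showing that each $(F_n,q_n)$ is a Banach space. Since $U$ is injective, it restricts to an isometry of $(F_n,q_n)$ onto $U(F_n)\subset\Lambda_n$, so completeness of $(F_n,q_n)$ is equivalent to $U(F_n)$ being closed in $(\Lambda_n,r_n)$. Thus I would fix $n$, take $\alpha$ in the $r_n$-closure of $U(F_n)$ together with $x_j\in F_n$ satisfying $U(x_j)\to\alpha$ in $\Lambda_n$, and try to produce $x\in E$ with $U(x)=\alpha$: once this is done, $U(x)=\alpha\in\Lambda_n$ forces $x\in F_n$, and then $q_n(x_j-x)=r_n(U(x_j)-\alpha)\to0$ for free. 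Here (ii) enters through $x_j\in F_n\subset E_m$ for the index $m>n$ it provides, and the $\omega$-closedness of the unit ball of $\Lambda_n$ makes that ball, being coordinatewise bounded, compact in $\omega$; this is what allows injectivity of $U$ to identify any candidate limit. Concretely, choosing $k>m$ with $E_m\hookrightarrow E_k$ compact, \emph{once one knows that $(x_j)$ is bounded in $E_m$}, the compact inclusion yields a subsequence $x_{j_l}\to x$ in $E_k\subset E$, so $g_i(x)=\lim_l g_i(x_{j_l})=\alpha_i$ for all $i$, i.e. $U(x)=\alpha$, as required.

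The hard part will be exactly that boundedness of the Cauchy sequence in the single step $E_m$. The difficulty is structural: $q_n$-boundedness controls only $U(x_j)$ inside $\Lambda_n$, and the naive normalisation $y_j=x_j/\norm{x_j}_m$ yields a sequence with $q_n(y_j)\to0$ and $\norm{y_j}_m=1$ whose $E_k$-limit is forced to be $0$ by injectivity of $U$ --- this is consistent rather than contradictory, because $\norm{\cdot}_k$ is strictly weaker than $\norm{\cdot}_m$. Ruling out such an escape to infinity is the essential point, and I expect it to require using the algebraic inclusions $F_n\subset E_{m(n)}$ for \emph{all} levels $n$ simultaneously (a regularity/acyclicity-type argument for the (DFS)-structure), rather than the compact embedding at one single level alone. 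This is where the delicate analysis advertised for this section is concentrated; the remaining passage from $\tau_1=\tau_2$ to the frame property is then immediate from Proposition~\ref{frames_LB_Montel}.
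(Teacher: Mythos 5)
Your treatment of (i) $\Rightarrow$ (ii) is correct and coincides with the paper's argument, and your reduction of (ii) $\Rightarrow$ (i) sets up the right frame: since a (DFS)-space is Montel, Proposition \ref{frames_LB_Montel} reduces everything to $\tau_1=\tau_2$, which by Proposition \ref{frames_LB} amounts to completeness of each $(F_n,q_n)$; moreover your observation that, once a $q_n$-Cauchy sequence $(x_j)\subset F_n$ is known to be bounded in $E_m$, the compact inclusion $E_m\hookrightarrow E_k$ produces a limit $x$ with $U(x)=\alpha$ coordinatewise, hence $x\in F_n$ and $q_n(x_j-x)\to 0$, is sound.

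However, the statement you postpone --- that $q_n$-bounded (equivalently, $q_n$-Cauchy) subsets of $F_n$ are bounded in some step $E_{m'}$ --- \emph{is} the theorem; what you actually prove is the routine part, and you acknowledge this yourself, so the gap is genuine. Here is how the paper fills it, working at the single fixed level $n$ with the $m$ given by (ii). It sets $B=\{x\in F_n:\ q_n(x)\le 1,\ \norm{x}_{m+1}>1\}$ and $A=\{x/\norm{x}_{m+1}:\ x\in B\}$, and proves the claim that $\norm{x}_{m+1}\le C\,p_{j_0}(U(x))$ on $B$ for some member $p_{j_0}$ of a fundamental system of seminorms of $\omega$. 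The claim is proved by contradiction: otherwise there is $(y_j)\subset A$ with $p_j(U(y_j))\le 1/j!$. If $(y_j)$ were bounded in $E_m$, essentially your compactness argument applies --- but note the normalization is taken in the norm $\norm{\cdot}_{m+1}$ of the \emph{target} of the compact inclusion, so the subsequential limit $y$ has $\norm{y}_{m+1}=1$ while injectivity of $U$ forces $y=0$; this is precisely how the paper avoids the ``consistent escape to $0$'' you worried about. If instead $(y_j)$ is unbounded in $E_m$, the paper runs a gliding-hump construction: using that $U':\omega'\to E_m'$ has $\sigma(E_m',E_m)$-dense range (injectivity again), it builds functionals $\varphi_\ell\in\omega'$ with $\varphi_\ell\circ U\in B_{E_m'}$ and indices $j_\ell$ such that the vector $y=\sum_{k}\frac{1}{k2^k}y_{j_k}$ --- which lies in $F_n\subset E_m$ because $\sum_k\frac{1}{k2^k}U(y_{j_k})$ converges absolutely in the Banach space $\Lambda_n$ --- satisfies $\left|\varphi_\ell(U(y))\right|\ge \ell-1$ for every $\ell$, contradicting $\left|\varphi_\ell(U(y))\right|\le\norm{y}_m<\infty$. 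A final normalization-plus-compactness argument (injectivity once more, through $q_{m+2}(z)=r_{m+2}(U(z))>0$) upgrades the claim to boundedness of $B$ in $E_{m+2}$, i.e.\ to continuity of the inclusion $F_n\subset E_{m+2}$, which gives $\tau_1=\tau_2$ and lets Proposition \ref{frames_LB_Montel} conclude. Note finally that your guess about the missing mechanism points in the wrong direction: no regularity or acyclicity argument involving all levels simultaneously is used; hypothesis (ii) enters only at the fixed level $n$, combined with two extra compact steps $E_m\subset E_{m+1}\subset E_{m+2}$, the Bessel property (through $E_{m+2}\subset F_{m+2}$ and $q_{m+2}\le q_n$), and, crucially and repeatedly, the injectivity of $U$.
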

    \begin{proof} If (i) is satisfied, $\tau_1 = \tau_2 = \tau_3.$ The injectivity of $U$ follows as in the proof of Proposition \ref{frames_LB} and the rest of (ii) follow by Corollary \ref{cor:factorization}.
\par
We prove that (ii) implies (i). Without loss of generality we can assume that $E_n \subset E_{n+1}$ with compact inclusion, $E_n \subset F_n$ and $q_n(x) \leq \norm{x}_n$ and $\norm{x}_{n+1}\leq \norm{x}_n$ for every $x\in E_n.$ It suffices to show that, under condition (ii), the inclusion $F_n \subset E_{m+2}$ is continuous. In fact, this implies the coincidence of the topologies $\tau_1 = \tau_2,$ hence the $\Lambda$-frame property by Proposition \ref{frames_LB_Montel}.
     \par
     Fix $n \in \mathbb{N}$ and define
     $$
     B = \left\{x\in F_n:\ q_n(x)\leq 1,\ \ \norm{x}_{m+1} > 1\right\}
     $$ and
     $$
     A = \left\{y = \frac{x}{\norm{x}_{m+1}}:\ x\in B\right\}.
     $$
     We can assume $B$ is an infinite set since otherwise the inclusion $F_n\subset E_{m+1}$ is continuous and we are done. Let $(p_j)_j$ denote a fundamental system of seminorms for the Fr\'echet space $\omega.$ We {\it claim} that there are $j_0\in {\mathbb N}$ and $C > 0$ such that $$\norm{x}_{m+1} \leq C p_{j_0}(U(x))$$ for every $x\in B.$
     Otherwise there is a sequence $(y_j)_j \subset A$ such that
     \begin{equation}\label{converge_omega}
     p_j(U(y_j)) \leq \frac{1}{j!}.
    \end{equation}
Assume that $(y_j)_j$ is bounded in $E_m$. Then it would be relatively compact in
$E_{m+1}$.
Therefore,
there is  a subsequence $(y_s)_s$ of $(y_j)_j$ that converges
to $y$ in $E_{m+1}$.
Hence $U(y_s) \rightarrow U(y)$ in $\Lambda$, hence in $\omega$. We can
apply (\ref{converge_omega}) to conclude
that $U(y)=0$, hence $y=0$, since $U$ is injective. This contradicts
$||y_s||_{m+1}=1$ for all $s$. 
Consequently, $(y_j)_j$ is unbounded in $E_m.$ Hence, for $j_1 = 1$ there exists $j_2 > j_1$ such that
     $$
     \frac{1}{6\cdot 2^2}\norm{y_{j_2}}_m > 3\norm{y_{j_1}}_m.
     $$ There is $\psi$ in the unit ball $B_{E_m'}$ of $E_m'$ such that
$$
\frac{1}{6\cdot 2^2}\left|\psi(y_{j_2})\right| > 3\norm{y_{j_1}}_m > 2\left|\psi(y_{j_1})\right|.
$$ Since $U:E_m\to \omega$ is a continuous and injective map then $U':\omega'\to E_m'$ has $\sigma(E_m',E_m)$-dense range and we can find $\varphi_2\in \omega'$ such that
$$
\max_{k=1,2}\left|(\varphi_2\circ U - \psi)(y_{j_k})\right|
$$ is so small that
$$
\frac{1}{6\cdot 2^2}\left|\varphi_2(U(y_{j_2}))\right| > 3\norm{y_{j_1}}_m > 2 \left|\varphi_2(U(y_{j_1}))\right|.
$$ By condition (\ref{converge_omega}) there is $j_2'$ such that
$$
\left|\varphi_2(U(y_{j}))\right| < \left|\varphi_2(U(y_{j_2}))\right|,\ \ j > j_2'.
$$ Proceeding by induction it is possible to obtain a sequence $(\varphi_\ell)_\ell \subset \omega'$ and an increasing sequence $(j_\ell)_\ell$ of indices such that $\varphi_\ell\circ U\in B_{E_m'}$ and
    $$
    \frac{1}{\ell(\ell+1)2^{\ell}}\left|\varphi_\ell(U(y_{j_\ell}))\right| > 3\sum_{k=1}^{\ell-1}\norm{y_{j_k}}_m > 2\sum_{k=1}^{\ell-1}\left|\varphi_\ell(U(y_{j_k}))\right|
    $$ while
    $$
    \left|\varphi_\ell(U(y_{j_k}))\right| < \left|\varphi_\ell(U(y_{j_\ell}))\right| \ \ \forall k > \ell.
    $$ We now consider
    $$
    y = \sum_{k=1}^\infty\frac{1}{k 2^k}y_{j_k} \in E_{m+1}.
    $$ Then
    $$
    \varphi_\ell(U(y)) = \sum_{k=1}^\infty\frac{1}{k 2^k}\varphi_\ell(U(y_{j_k})),
    $$ hence
    $$
    \begin{array}{ll}
    \left|\varphi_\ell(U(y))\right| & \begin{displaystyle} \geq \frac{1}{\ell 2^\ell}\left|\varphi_\ell (U(y_{j_\ell}))\right| - \sum_{k < \ell}\frac{1}{k 2^k}\left|\varphi_\ell(U(y_{j_k}))\right| -\sum_{k > \ell}^\infty\frac{1}{k 2^k}\left|\varphi_\ell(U(y_{j_k}))\right|\end{displaystyle}\\ & \\ & \begin{displaystyle}\geq \left(\frac{1}{\ell 2^\ell} - \sum_{k > \ell}\frac{1}{k 2^k}\right)\left|\varphi_\ell (U(y_{j_\ell}))\right| - \sum_{k < \ell}\frac{1}{k 2^k}\left|\varphi_\ell(U(y_{j_k}))\right|\end{displaystyle}\\ & \\ & \begin{displaystyle}\geq \frac{1}{\ell(\ell+1)2^\ell}\left|\varphi_\ell (U(y_{j_\ell}))\right| - \frac{3}{2}\sum_{k < \ell}\norm{y_{j_k}}_m\end{displaystyle}\\ & \\ & \begin{displaystyle}\geq \sum_{k < \ell}\norm{y_{j_k}}_m \geq \sum_{k < \ell}\norm{y_{j_k}}_{m+1} = \ell -1.\end{displaystyle}
    \end{array}
    $$ On the other hand $r_n(U(y_{j_k}))\leq 1$ for every $k\in {\mathbb N},$ which implies that the series
$$
\sum_{k=1}^\infty\frac{1}{k2^k}U(y_{j_k})
$$ converges in the Banach space $\Lambda_n.$ Hence $y\in F_n \subset E_m.$ Since $\varphi_\ell \circ U\in B'_{E_m}$ then $\left|\varphi_\ell(U(y))\right| \leq 1,$ which is a contradiction. Consequently the claim is proved and there are $j_0\in {\mathbb N}$ and $C > 0$ such that $$\norm{x}_{m+1} \leq C p_{j_0}(U(x))$$ for every $x\in B.$ In order to conclude that the inclusion $F_n\subset E_{m+2}$ is continuous, it suffices to check that $B$ is bounded in $E_{m+2}.$ To this end we first observe that
$$
1\leq \norm{x}_{m+1}\leq C p_{j_0}(U(x))\leq C'\norm{x}_{m+2}
$$ for some $C'>0$ and for all $x\in B.$ Then
$$
\left\{\frac{x}{\norm{x}_{m+2}}:\ x\in B\right\}\subset E_m
$$ is a bounded set in $E_{m+1},$ hence relatively compact in $E_{m+2}.$ We now proceed by contradiction and assume that $B$ is unbounded in $E_{m+2}.$ Then there exists a sequence $(x_j)_j\subset B$ with $\norm{x_j}_{m+2}\geq j.$ Passing to a subsequence if necessary we can assume that
$$
z_j:=\frac{x_j}{\norm{x_j}_{m+2}}
$$ converges to some element $z\in E_{m+2}$ such that $\norm{z}_{m+2} = 1.$ Since the inclusion $E_{m+2}\subset F_{m+2}$ is continuous we get
$$
\lim_{j\to \infty}q_{m+2}(z_j-z) = 0.
$$ From the injectivity of $U$ we get $q_{m+2}(z) = r_{m+2}(U(z)) = a > 0,$ and there is $j_0\in {\mathbb N}$ such that $q_{m+2}(z_j) \geq \frac{a}{2}$ whenever $j\geq j_0,$ which implies $q_{m+2}(x_j) \geq \frac{a}{2}j$ for all $j\geq j_0.$ This is a contradiction, since ($m > n$)
$$
q_{m+2}(x_j) \leq q_n(x_j) \leq 1.
$$ The proof is complete.

    \end{proof}

    \section{Examples}\label{Examples}

   \subsection{Weighted spaces of holomorphic functions}

Let $G$ be  either an open disc centered at the origin or $\mathbb{C}.$ A {\it radial weight} on $G$ is a  strictly positive continuous
function $v$ on $G$ such that $v(z)=v(|z|),\ z \in G.$ Then,  the {\it weighted Banach space of holomorphic functions}
is defined by
\begin{center}
$Hv(G) := \left\{ f \in {\mathcal H}(G) \ : \  ||f||_v := \sup_{z \in G} v(|z|) |f(z)| <
+ \infty \right\}.$
\end{center}
\par\medskip
Let $V=(v_n)_n$ be a decreasing sequence of weights on $G.$ Then the weighted inductive limit of spaces of holomorphic functions is defined by $$VH:={\rm ind_n}\ Hv_n(G),$$ that is, $VH(G)$ is the increasing union of the Banach spaces $Hv_n(G)$ endowed with the strongest locally convex topology for which all the injections $Hv_n(G) \to VH(G)$ become continuous.

Similarly, given an increasing sequence of weights $W=(w_n)_n$ on $G,$  the weighted projective limit of spaces of entire functions  is defined by $$HW(G):={\rm proj_n}Hw_n(G),$$
that is, $HW(G)$ is the decreasing intersection of the Banach spaces $Hw_n(G)$ whose topology is defined by the sequence of norms $|| \cdot||_{w_n}.$ It is a Fr\'echet space.

In both cases, when $G=\mathbb{C}$ we will simply write $VH$ and $HW.$

 Given any sequence $S:=(z_i)_i$ and a decreasing sequence of weights $V$ on $G$, put
$$\nu_n(i)=v_n(z_i)$$
and $$V\ell_\infty (S)={\rm ind}_n \ell_\infty(\nu_n).$$ For an increasing sequence of weights $W=(w_n)_n$ on $G,$ put $$\omega_n(i):=w_n(z_i)$$ and $$\ell_\infty W(S)=\bigcap_n\ell_\infty(\omega_n).$$ Obviously, the restriction maps $$R:VH(G) \to V\ell_\infty (S), \, f\mapsto (f(z_i))_i$$ and $$R:HW(G) \to \ell_\infty W (S), \, f\mapsto (f(z_i))_i$$
are well defined and continuous, that is, $(\delta_{z_i})_i$ is a $V\ell_\infty (S)$-Bessel sequence for $VH(G)$ and a $\ell_\infty W (S)$-Bessel sequence for $HW(G).$ We want to analyze when these Bessel sequences are in fact frames, that is, when the restriction map is an isomorphism into.

Let us first concentrate on the Fr\'echet case.
Then,  $(\delta_{z_i})_i$ is a  $\ell_\infty W (S)$-frame if and only if for every $n$ there are $m$ and $C$ such that $$ \sup_{z\in G}|f(z)|w_n(z)\leq C \sup_{i}|f(z_i)|w_m(z_i)$$ for every $f\in HW(G).$
This is the same as saying that $S$ is a {\it sufficient set} for $HW(G).$ The concept of sufficient set was introduced by Ehrenpreis in \cite{Ehrenpreis}.
\par\medskip
The $\left(LB\right)$-case is more delicate. Following the notation of section \ref{sec:LB}, if $E_n:=Hv_n(G),$ the space $F_n:=\left\{f\in VH(G): R(f)\in \ell_\infty(\nu_n)\right\}$ is usually denoted by $A(S,v_n)$ and the corresponding seminorm $q_n$ is denoted $\norm{\cdot}_{n,S},$ that is,
$$
\norm{f}_{n,S} = \sup_{i\in {\mathbb N}}\left|f(z_i)\right|\nu_n(i),\ \ f\in A(S,v_n).
$$
Then $\tau_1$ is the topology of the inductive limit $VH(G)$ and $\tau_2$ is the one of ${\rm ind}_n A(S,v_n).$  We recall that $S$ is said to be {\it weakly sufficient} for $VH(G)$ when $VH(G)={\rm ind}_n A(S,v_n)$ topologically, i.e.\ $\tau_1 = \tau_2$. It should be mentioned that this definition a priori is not restricted to discrete sets, but this is the most interesting case.  We obtain the following general results, that have been formulated in several papers in one way or other in concrete situations. See in particular \cite{korobeinik1}, \cite{korobeinik2}, \cite{napalkov1} and \cite{napalkov2}.

\begin{theorem} The following statements are equivalent:
\begin{itemize}

 \item[(i)] $S:=(z_i)_i$ is weakly sufficient.

 \item[(ii)] $A(v_n,S)$ is a Banach space for every $n\in\mathbb{N}.$

 \item[(iii)] For each $n$ there are $m\geq n$ and  $C>0$ such that for every $f\in VH(G)$ one has $$||f||_m\leq C ||f||_{n,S}.$$
     \end{itemize}
     \end{theorem}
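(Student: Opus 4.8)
The plan is to read this theorem as the concrete instance of the abstract machinery of Section~\ref{sec:LB}, specialized to $E = VH(G) = {\rm ind}_n\, Hv_n(G)$ and $\Lambda = V\ell_\infty(S) = {\rm ind}_n\, \ell_\infty(\nu_n)$, with $E_n = Hv_n(G)$, $\Lambda_n = \ell_\infty(\nu_n)$, and $(g_i)_i = (\delta_{z_i})_i$ the evaluation functionals. First I would record the dictionary between the two settings: the analysis operator $U$ is exactly the restriction map $R(f) = (f(z_i))_i$, which the text has already shown to be continuous, so $(\delta_{z_i})_i$ is a $\Lambda$-Bessel sequence; the space $F_n = \{f \in VH(G) : U(f) \in \Lambda_n\}$ is precisely $A(S,v_n)$, and the seminorm $q_n(f) = r_n(U(f)) = \sup_i |f(z_i)|\,\nu_n(i)$ is exactly $\|f\|_{n,S}$. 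Under this dictionary $\tau_1$ is the $VH(G)$-topology and $\tau_2$ is the topology of ${\rm ind}_n\, A(S,v_n)$, so weak sufficiency of $S$, i.e.\ statement (i), is by definition the equality $\tau_1 = \tau_2$.

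Next I would check that the standing hypothesis of Proposition~\ref{frames_LB} and Corollary~\ref{cor:factorization} holds here, namely that the closed unit ball of $\Lambda_n = \ell_\infty(\nu_n)$ is closed in $\omega$. This is immediate: the ball is $\{(x_i)_i : |x_i|\,\nu_n(i) \le 1 \text{ for all } i\}$, an intersection of the $\omega$-closed slabs $\{|x_i|\,\nu_n(i) \le 1\}$, hence closed for the topology of coordinatewise convergence. I would also note that $VH(G)$ and $V\ell_\infty(S)$ are complete $(LB)$-spaces (standard for weighted inductive limits of holomorphic function spaces and for K\"othe co-echelon spaces of type $\infty$), so that the hypotheses of the abstract results are in force.

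With this in place the theorem is a direct transcription. Proposition~\ref{frames_LB} gives $\tau_1 = \tau_2$ if and only if $(F_n, q_n)$ is a Banach space for each $n$, which is exactly the equivalence (i)$\Leftrightarrow$(ii). Corollary~\ref{cor:factorization} gives $\tau_1 = \tau_2$ if and only if for each $n$ there are $m$ and $C$ with $F_n \subset E_m$ and $\|f\|_m \le C\,q_n(f)$ for every $f \in F_n$; rewriting $q_n$ as $\|\cdot\|_{n,S}$ yields (iii). Two small points I would address explicitly: first, since $V$ is decreasing one has $\|f\|_{m'} \le \|f\|_m$ for $m' \ge m$, so the index $m$ furnished by the corollary can always be enlarged to satisfy $m \ge n$ as demanded in (iii); second, for $f \in VH(G)$ with $\|f\|_{n,S} = +\infty$ the inequality in (iii) is vacuous, so the quantification over all $f \in VH(G)$ is equivalent to the statement restricted to $f \in F_n = A(S,v_n)$, matching the corollary exactly.

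There is no genuine analytic obstacle here; the entire content of the theorem lives in Section~\ref{sec:LB}. The only points requiring care are the bookkeeping just described and the verification that the concrete spaces satisfy the abstract hypotheses --- in particular the $\omega$-closedness of the $\ell_\infty(\nu_n)$-balls (which, as the text stresses, would fail for a $c_0$-type target and is precisely why the $\ell_\infty$ formulation is chosen) and the completeness of the $(LB)$-spaces involved. I note that we need only the equivalences from Proposition~\ref{frames_LB} and Corollary~\ref{cor:factorization}, neither of which requires the Montel hypothesis of Proposition~\ref{frames_LB_Montel}, so no additional structure on $VH(G)$ is needed beyond completeness.
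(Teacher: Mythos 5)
Your proposal is correct and follows exactly the paper's own route: the paper's proof is the single line \lqq Apply Proposition \ref{frames_LB} and \ref{cor:factorization}\rqq, and you carry out precisely this application, merely making explicit the dictionary $E_n = Hv_n(G)$, $\Lambda_n = \ell_\infty(\nu_n)$, $F_n = A(S,v_n)$, $q_n = \norm{\cdot}_{n,S}$ and the hypothesis checks (closedness of the $\ell_\infty(\nu_n)$-balls in $\omega$, completeness, enlarging the index $m$) that the paper leaves implicit.
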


     \noindent
     \begin{proof} Apply Proposition \ref{frames_LB} and \ref{cor:factorization}.
     \end{proof}

\begin{theorem}\label{effective set} Let us assume that $\frac{v_{n+1}}{v_n}$ vanishes at infinity on $G$ for every $n\in {\mathbb N}.$ Then, the following conditions are equivalent:
\begin{itemize}
 \item[(i)] $S:=(z_i)_i$ is weakly sufficient.
\item[(ii)] The restriction map $VH(G) \to V\ell_\infty (S)$ is injective and for each $n$ there are $m\geq n$ and  $C>0$ such that $A(v_n,S) \subset Hv_m(G).$
\end{itemize}
\end{theorem}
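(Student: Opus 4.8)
The plan is to deduce the theorem from the abstract Theorem~\ref{frames_DFS}, applied with $E=VH(G)$, $E_n=Hv_n(G)$, $\Lambda=V\ell_\infty(S)$, $\Lambda_n=\ell_\infty(\nu_n)$ and $g_i=\delta_{z_i}$, so that the analysis operator $U$ is the restriction map $R\colon f\mapsto (f(z_i))_i$ and $F_n=A(S,v_n)$. Continuity of $R$, i.e.\ that $(\delta_{z_i})_i$ is a $V\ell_\infty(S)$-Bessel sequence, is already established, so the whole task is to verify the structural hypotheses of Theorem~\ref{frames_DFS} and then to match its two conditions with (i) and (ii).

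First I would check the hypotheses on the spaces. The closed unit ball of $\ell_\infty(\nu_n)$ is the product $\prod_i\{|x_i|\le 1/\nu_n(i)\}$, hence compact, and a fortiori closed, in $\omega$; this gives the hypothesis that the closed unit ball of $\Lambda_n$ be closed in $\omega$. The decisive use of the standing assumption that $\frac{v_{n+1}}{v_n}$ vanishes at infinity is to show that $VH(G)$ is a $(DFS)$-space: if $\|f\|_{v_n}\le 1$ then $|f(z)|\le 1/v_n(z)$, so by Montel's theorem such $f$ form a normal family, while outside a large compact set $v_{n+1}(z)|f(z)|\le (v_{n+1}/v_n)(z)\to 0$ uniformly; hence the unit ball of $Hv_n(G)$ is relatively compact in $Hv_{n+1}(G)$ and the linking maps are compact. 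The same decay, read along $S$, renders the diagonal inclusions $\ell_\infty(\nu_n)\hookrightarrow\ell_\infty(\nu_{n+1})$ compact (at least when $S$ is discrete, the case of interest), so that $\Lambda=V\ell_\infty(S)$ is itself a $(DFS)$-, in particular complete $(LB)$-, space; I would record this completeness explicitly, since it is what the invocation of Proposition~\ref{frames_LB_Montel} inside Theorem~\ref{frames_DFS} rests on.

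Next I would translate the two conditions. Condition~(ii) of Theorem~\ref{frames_DFS} reads: $U$ injective and, for each $n$, some $m$ with $F_n\subset E_m$ — which is verbatim the injectivity of $R$ together with $A(v_n,S)\subset Hv_m(G)$, i.e.\ our condition~(ii). Condition~(i) of Theorem~\ref{frames_DFS} is that $(\delta_{z_i})_i$ be a $\Lambda$-frame, which (as noted at the start of Section~\ref{sec:LB}) means $\tau_1=\tau_2=\tau_3$. Since weak sufficiency is exactly $\tau_1=\tau_2$, I must bridge the two: one implication is trivial ($\tau_1=\tau_2=\tau_3$ forces $\tau_1=\tau_2$), and for the converse I would invoke that $VH(G)$, being $(DFS)$, is Montel, so that Proposition~\ref{frames_LB_Montel} upgrades $\tau_1=\tau_2$ to the full frame property. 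Hence weak sufficiency is equivalent to the $\Lambda$-frame property, and the equivalence of (i) and (ii) follows at once from Theorem~\ref{frames_DFS}.

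I expect the substance to lie entirely in the implication (ii)$\Rightarrow$(i). The reverse implication (i)$\Rightarrow$(ii) is routine: weak sufficiency already yields, by Corollary~\ref{cor:factorization} and the injectivity argument in the proof of Proposition~\ref{frames_LB}, both the injectivity of $R$ and the inclusion $A(v_n,S)\subset Hv_m(G)$ (even with a norm bound). The hard point is to promote the bare set-theoretic inclusion $A(v_n,S)\subset Hv_m(G)$ appearing in (ii) to the topological identity $\tau_1=\tau_2$; this is precisely the delicate content of Theorem~\ref{frames_DFS}, whose proof extracts a \emph{continuous} inclusion $F_n\subset E_{m+2}$ from the compactness of the weighted embeddings. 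Thus the genuine obstacle is not in this final deduction but in having the $(DFS)$ structure available, which is exactly the role played by the hypothesis that $\frac{v_{n+1}}{v_n}$ vanishes at infinity.
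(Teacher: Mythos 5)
Your proposal is correct and takes essentially the same route as the paper, whose entire proof is the single line \lqq It follows from Proposition \ref{frames_DFS}\rqq, i.e.\ Theorem \ref{frames_DFS} applied with $E=VH(G)$, $E_n=Hv_n(G)$, $\Lambda=V\ell_\infty(S)$, $g_i=\delta_{z_i}$, $F_n=A(S,v_n)$. The points you spell out --- the $(DFS)$ property of $VH(G)$, closedness of the unit ball of $\ell_\infty(\nu_n)$ in $\omega$, completeness of $V\ell_\infty(S)$, and the bridge between weak sufficiency ($\tau_1=\tau_2$) and the full $\Lambda$-frame property via the Montel argument of Proposition \ref{frames_LB_Montel} --- are precisely the verifications the paper leaves implicit.
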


\noindent
\begin{proof} It follows from Proposition \ref{frames_DFS}.\end{proof}
\par\medskip\noindent
The injectivity of the restriction map means that $S$ is a {\it uniqueness set} for $VH(G).$ As a consequence of Proposition \ref{frames_LB_Montel} we obtain

\begin{theorem}\label{ws-Montel}
If $VH(G)$ is Montel, $S$ is  weakly sufficient if and only if
 the restriction map $$R:VH(G) \to V\ell_\infty(S), \, \, f \, \mapsto f|_S,$$ is a topological isomorphism into.
  \end{theorem}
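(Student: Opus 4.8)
The plan is to recognize that the restriction map $R$ is precisely the analysis operator $U$ associated with the $V\ell_\infty(S)$-Bessel sequence $(\delta_{z_i})_i$, so that by Definition \ref{FR_Pre_Def_Fr} the assertion ``$R$ is a topological isomorphism into'' is literally the statement that $(\delta_{z_i})_i$ is a $V\ell_\infty(S)$-frame. The theorem therefore reduces to the equivalence: $S$ is weakly sufficient if and only if $(\delta_{z_i})_i$ is a $V\ell_\infty(S)$-frame. I would carry this out within the framework of Section \ref{sec:LB}, with $E = VH(G) = {\rm ind}_n Hv_n(G)$ and $\Lambda = V\ell_\infty(S) = {\rm ind}_n \ell_\infty(\nu_n)$; here $\tau_1$ is the inductive-limit topology of $VH(G)$, $\tau_2$ that of ${\rm ind}_n A(S,v_n)$, and weak sufficiency of $S$ is exactly the coincidence $\tau_1 = \tau_2$.

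For the forward implication, assume $S$ is weakly sufficient, i.e. $\tau_1 = \tau_2$. Since $VH(G)$ is Montel by hypothesis, Proposition \ref{frames_LB_Montel} applies and yields that $(\delta_{z_i})_i$ is a $V\ell_\infty(S)$-frame; unwinding the definition, this says $R = U$ is a topological isomorphism into $V\ell_\infty(S)$. This direction carries all the substance, but it is already packaged in Proposition \ref{frames_LB_Montel} (whose proof rests on Baernstein's lemma and the regularity of the complete $(LB)$-space $\Lambda$), so no new argument is needed.

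For the converse, suppose $R$ is a topological isomorphism into. Then $(\delta_{z_i})_i$ is a $V\ell_\infty(S)$-frame by definition, and by the observation recorded at the beginning of Section \ref{sec:LB} this forces $\tau_1 = \tau_2 = \tau_3$. In particular $\tau_1 = \tau_2$, i.e. $S$ is weakly sufficient. This direction uses neither the Montel property nor any factorization and is immediate from the definitions.

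The one point requiring care, and the only genuine obstacle, is to confirm that the hypotheses of Proposition \ref{frames_LB_Montel} are met — specifically, that $\Lambda = V\ell_\infty(S)$ is a complete $(LB)$-space, so that the regularity exploited in that proposition is available. I would note that weighted $\ell_\infty$-inductive limits of this form are complete $(LB)$-spaces, which is precisely the standing assumption under which Section \ref{sec:LB} operates; with this in hand the two implications combine to give the stated equivalence.
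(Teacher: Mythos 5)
Your proposal is correct and follows the paper's own route: the paper derives Theorem \ref{ws-Montel} exactly as a consequence of Proposition \ref{frames_LB_Montel} (for the direction where Montel is needed), with the converse coming from the observation in Section \ref{sec:LB} that the frame property is equivalent to $\tau_1=\tau_2=\tau_3$. Your added check that $V\ell_\infty(S)$ is a complete $\left(LB\right)$-space is the right hypothesis to verify and is, if anything, more explicit than the paper itself.
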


Theorem \ref{ws-Montel} asserts that, if the space $VH(G)$ is Montel, the set of  Dirac evaluations $\{ \delta_z \ | \ z \in S\} \subset VH(G)'$ is a $V\ell_\infty(S)$-frame if and only if $S$ is  weakly sufficient.

 If the sequence $V=(v)$ reduces to one weight, $(\delta_{z_i})_i$ is a $\ell_\infty (\nu)$-frame for $Hv(G)$ if and only if $S$ is a {\it sampling set} for $Hv(G).$ If $(v_n)_n$  is a decreasing  sequence of weights on $G$ and $S$ is a sampling set for $Hv_n(G)$ for each $n$, then $S$ is a weakly sufficient set for $VH(G).$  However, Khoi and Thomas \cite{khoi-thomas} gave examples of countable weakly sufficient sets $S=(z_i)_i$ in the space $$A^{-\infty}(\mathbb{D}):={\ind}_nHv_n(\mathbb{D}), \mbox{ with }\, v_n(z)=(1-|z|)^n,$$ which are not sampling sets for any $Hv_n(\mathbb{D}),$ $n\in \mathbb{N}.$ As $A^{-\infty}(\mathbb{D})$ is Montel, $(\delta_{z_i})_i$ is a $V\ell_\infty(S)$-frame for $A^{-\infty}(\mathbb{D})$ which is not a $\ell_\infty (\nu_n)$-frame for $Hv_n(\mathbb{D})$ for any $n.$ Bonet and Domanski \cite{Bonet_Domanski_2003_Sampling} studied weakly sufficient sets in $A^{-\infty}(\D)$ and their relation to what they called $(p,q)$-sampling sets.
\par\medskip
The dual of the space $A^{-\infty}(\mathbb{D})$ can be identified via the Laplace transform with the space of entire functions $A^{-\infty}_{\mathbb D}:= HW(\mathbb{C})$ for the sequence of weights $W=(w_n)_n$, $$w_n(z)=(1+|z|)^ne^{-|z|},$$ (see \cite{melikhov} and also \cite{abanin_khoi} for the several variables case). In \cite{abanin_khoi} explicit constructions of sufficient sets for this space are given. For instance, for each $k$ take  $\ell_k \in \mathbb{N}, \, \ell_k>2\pi k^2,$ and let $z_{k,j}:=kr_{k,j}$, $1\leq j \leq \ell_k,$ where $r_{k,j}$ are the $\ell_k$-roots of the unity, then, with an appropriate order, $(\delta_{z_{k,j}}: k\in {\mathbb N}, 1\leq j \leq \ell_k)$ is a $\ell_\infty W(S)$-frame in $A^{-\infty}_{\mathbb D}.$  More examples for non-radial weights can be found in \cite{Abanin_Varziev}.

Finally, from Proposition \ref{frames-representing},  we recover the following consequence about representing systems. It should be compared with Corollary \ref{cor:rep-Ap} below.
\begin{theorem}\label{abanin_khoi}{\rm (\cite{abanin_khoi})}
\begin{itemize}
 \item[(i)] $(\lambda_k )_k \subset \mathbb{C}$ is   sufficient for $A^{-\infty}_{\mathbb D}$ if and only if every function $f\in A^{-\infty}(\D)$ can be represented as $$f(z)=\sum_{k}\alpha_ke^{\lambda_kz}$$ where
$$
\sum_{k}\left|\alpha_k\right|(1+|\lambda_k|)^{-n}e^{|\lambda_k|} < \infty\ \mbox{for some}\ n\in {\mathbb N}.
$$
        \item[(ii)]  $(\lambda_k)_k \subset \mathbb{D}$ is weakly sufficient in $A^{-\infty}(\D)$ if and only if each function $f\in A^{-\infty}_{\mathbb D}$ can be represented as $$f(z)=\sum_{k}\alpha_ke^{\lambda_kz}$$ where
$$
\sum_k\left|\alpha_k\right|(1-|\lambda_k|)^{-n} < \infty\ \mbox{for every}\ n\in {\mathbb N}.
$$
     \end{itemize}

     \end{theorem}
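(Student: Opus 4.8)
The plan is to read both equivalences off the abstract frame/representing-system dictionary of Proposition~\ref{frames-representing} once the Laplace transform duality is in place. Recall that the Laplace (Borel) transform identifies $A^{-\infty}(\mathbb{D})'$ with $A^{-\infty}_{\mathbb D}$, and under this identification the evaluation functional $\delta_{\lambda}$ corresponds to the exponential $z\mapsto e^{\lambda z}$, since $\delta_\lambda(\mathcal{L}T)=T(e^{\lambda z})$ for $T$ in the relevant dual. As $A^{-\infty}(\mathbb{D})$ is a $(DFS)$-space (hence Montel and reflexive) and $A^{-\infty}_{\mathbb D}$ is its strong dual (a Fréchet–Montel space), each is the strong dual of the other. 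In part~(i) I would take $E=A^{-\infty}_{\mathbb D}$ and $\Lambda=\ell_\infty W(S)$, so that $(\delta_{\lambda_k})_k\subset E'=A^{-\infty}(\mathbb{D})$ is exactly the sequence of exponentials $(e^{\lambda_k z})_k$; in part~(ii) I would take $E=A^{-\infty}(\mathbb{D})$ and $\Lambda=V\ell_\infty(S)$, so that $(\delta_{\lambda_k})_k\subset E'=A^{-\infty}_{\mathbb D}$ is $(e^{\lambda_k w})_k$. In both cases sufficiency (for the Fréchet space) and weak sufficiency (for the Montel $(LB)$-space) say precisely that $(\delta_{\lambda_k})_k$ is a $\Lambda$-frame: the first by the characterisation of sufficient sets recorded just before the theorem, the second by Theorem~\ref{ws-Montel}.

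The heart of the translation is to identify the dual sequence space $\Lambda'$ with the coefficient space appearing in the statement. For part~(i), $\Lambda=\ell_\infty W(S)=\bigcap_n\ell_\infty(\omega_n)$ with $\omega_n(k)=(1+|\lambda_k|)^n e^{-|\lambda_k|}$ is an echelon space whose strong dual is the co-echelon space $\bigcup_n\ell_1(1/\omega_n)$; membership there means $\sum_k|\alpha_k|(1+|\lambda_k|)^{-n}e^{|\lambda_k|}<\infty$ for some $n$, which is exactly the summability in~(i). For part~(ii), $\Lambda=V\ell_\infty(S)={\rm ind}_n\ell_\infty(\nu_n)$ with $\nu_n(k)=(1-|\lambda_k|)^n$ is a co-echelon space whose strong dual is the echelon space $\bigcap_n\ell_1(1/\nu_n)$; membership there means $\sum_k|\alpha_k|(1-|\lambda_k|)^{-n}<\infty$ for every $n$, which is exactly the summability in~(ii). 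I would complement this with the elementary saddle-point estimate $\|e^{\lambda z}\|_{v_m}\asymp(1+|\lambda|)^{-m}e^{|\lambda|}$ on $\mathbb{D}$ (and the analogous $\|e^{\lambda w}\|_{w_n}\asymp(1-|\lambda|)^{-n}$ for entire functions), which shows that a coefficient sequence in $\Lambda'$ produces a series $\sum_k\alpha_k e^{\lambda_k z}$ converging in the correct space, and that the transpose of the analysis operator is exactly $(\alpha_k)_k\mapsto\sum_k\alpha_k e^{\lambda_k z}$.

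With these identifications, part~(i) is an application of Proposition~\ref{frames-representing}(iv) (both $E=A^{-\infty}_{\mathbb D}$ and $\Lambda=\ell_\infty W(S)$ being Fréchet): $(\delta_{\lambda_k})_k$ is a $\Lambda$-frame if and only if it is $\Lambda$-Bessel, which it always is since the restriction map is continuous, and a $\Lambda'$-representing system for $(E',\mu(E',E))$. By reflexivity $\mu(E',E)=\beta(E',E)$ and $(E',\beta)=A^{-\infty}(\mathbb{D})$, and the representing-system condition unwinds to the representation $f=\sum_k\alpha_k e^{\lambda_k z}$ with $(\alpha_k)_k\in\Lambda'$ of an arbitrary $f\in A^{-\infty}(\mathbb{D})$. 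Part~(ii) is the analogous application of the reflexive $(LB)$-version stated just after Proposition~\ref{frames-representing} (the locally convex extension of \cite[Corollary 3.3]{casazza_jmaa}), in its case where $E$ is the strong dual of a Fréchet–Montel space and $\Lambda$ is an $(LB)$-space: here $E=A^{-\infty}(\mathbb{D})$, $\Lambda=V\ell_\infty(S)$, and the representing-system condition reads $f=\sum_k\alpha_k e^{\lambda_k w}$ with $(\alpha_k)_k\in\Lambda'$ for every entire $f\in A^{-\infty}_{\mathbb D}$.

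The substance of the proof is thus entirely in the dictionary rather than in a new analytic argument: the correspondence $\delta_\lambda\leftrightarrow e^{\lambda z}$, the Köthe duality computing $\Lambda'$, and the growth estimate matching the weights. The step I expect to require the most care is the verification of the standing hypotheses of the two propositions, namely that $\Lambda$ be reflexive with the canonical unit vectors forming a Schauder basis; for $\ell_\infty W(S)$ and $V\ell_\infty(S)$ this amounts to a Montel-type regular-decay condition on consecutive weights — heuristically $\omega_n/\omega_{n+1}=(1+|\lambda_k|)^{-1}\to 0$ and $\nu_{n+1}/\nu_n=(1-|\lambda_k|)\to 0$ — which holds precisely when the nodes accumulate only at infinity (part~(i)) resp.\ only on $\partial\mathbb{D}$ (part~(ii)), as is forced for a (weakly) sufficient set. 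Should one wish to bypass this bookkeeping, the same conclusion follows directly from the closed range theorem \cite[9.6.3]{jarchow} applied to the analysis operator $U$ and its transpose — which is in fact how both propositions are proved: $U$ is an isomorphism into exactly when $U'$ is surjective, and $U'(\alpha)=\sum_k\alpha_k e^{\lambda_k z}$.
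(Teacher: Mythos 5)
Your proposal is correct and follows essentially the same route as the paper: the paper obtains this theorem precisely by combining Proposition~\ref{frames-representing} (and its reflexive $(LB)$-variant) with the dictionary ``sufficiency / weak sufficiency $\leftrightarrow$ $\Lambda$-frame'' established earlier in Section~\ref{Examples} (including Theorem~\ref{ws-Montel}) and the Laplace-transform identification $\delta_{\lambda}\leftrightarrow e^{\lambda z}$, which is exactly your argument. The K\"othe-duality computation of $\Lambda'$ and the saddle-point norm estimates you supply are the details the paper leaves implicit, and your caveat that the canonical unit vectors must form a Schauder basis of $\ell_\infty W(S)$ resp.\ $V\ell_\infty(S)$ (i.e.\ that the nodes accumulate only at infinity resp.\ at $\partial\mathbb{D}$) corresponds to the discreteness the paper tacitly assumes when it speaks of discrete (weakly) sufficient sets.
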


\subsection{The H\"ormander algebras }\label{sec:hormander}

In this Section we use  Landau's notation of little $o$-growth and capital $O$-growth. A function $p: \mathbb{C} \rightarrow [0, \infty [$ is called a \textit{growth condition} if it is continuous, subharmonic, radial, increases with $|z|$ and satisfies

\begin{itemize}
\item [$(\alpha)$] $\log(1+|z|^2)=o(p(|z|))$ as $|z|\to \infty$,
\item [$(\beta)$] $p(2|z|)=O(p(|z|))$ as $|z|\to \infty.$
\end{itemize}

Given a growth condition $p$, consider the weight $v(z)=e^{-p(|z|)},$ $z\in \mathbb{C},$ and the decreasing sequence of weights $V=(v_n)_{n},$ $v_n=v^n.$ We define the following weighted spaces of entire functions (see e.g.\ \cite{Berenstein_Taylor_1979_a}, \cite{Berenstein_Gay_1995_complex}):
$$A_p:=\left\{f\in {\mathcal H}(\mathbb{C}): \ {\rm there \ is} \ A>0: \sup_{z\in \mathbb{C}}|f(z)| \exp(-Ap(z))<\infty \right\},$$
that is, $A_p=VH,$
endowed with the inductive limit topology, for which it is a $\left(DFN\right)$-algebra (see e.g. \cite{Meise_1985_sequence}). Given any sequence $S = (z_i)_i$ we will denote $A_p(S) = V\ell_\infty(S),$ that is,
$$
A_p(S) = \bigcup_n\ell_\infty(\nu_n),\ \ \nu_n(i) = e^{-np(|z_i|)}.
$$

If we consider the increasing sequence of weights $W=(w_n)_{n},$ $w_n=v^{1/n},$  we define

$$A^0_p:=\left\{f\in {\mathcal H}(\mathbb{C}): \ {\rm for \ all} \ \varepsilon>0: \sup_{z\in \mathbb{C}}|f(z)| \exp(-\varepsilon p(z))<\infty \right\},$$
that is, $A_p^0=HW,$
endowed with the projective limit topology, for which it is a nuclear Fr\'echet algebra (see e.g. \cite{Meise_Taylor_1987_sequence}). Clearly $A^0_p \subset A_p$. As before, given a sequence $S = (z_i)_i$ we will denote  $A_p^0(S) = \ell_\infty W(S),$ that is,
$$
A_p^0(S) = \bigcap_n\ell_\infty(\omega_n),\ \ \omega_n(i) = e^{-\frac{1}{n}p(|z_i|)}.
$$
\par\medskip
Condition $(\alpha)$ implies that, for each $a>0,$ the weight $v_a(z):=e^{-ap(|z|)}$ is rapidly decreasing, consequently, the polynomials are contained and dense in $H^0_{v_a},$ and that for $a<b$ the inclusion $H_{v_a} \subset H^0_{v_b}$ is compact. Therefore the polynomials are dense in $A_p$ and in $A^0_p.$
 Condition $(\beta)$ implies that both spaces are stable under differentiation. By the closed graph theorem, the differentiation operator $D$ is continuous on $A_p$ and on $A^0_p.$

Weighted algebras of entire functions of this type, usually known as H\"ormander algebras, have been considered since the work of Berenstein and Taylor \cite{Berenstein_Taylor_1979_a} by many authors; see e.g.\ \cite{Berenstein_Gay_1995_complex} and the references therein.

As an example, when $p_a(z)=|z|^a$, then $A_{p_a}$ consists of all entire functions of order $a$ and finite type or order less than $a,$ and $A^0_{p_a}$ is the space of all entire functions of order at most $a$ and type $0$. For $a=1$, $A_{p_1}$ is the space of all entire functions of exponential type, also denoted $Exp(\mathbb{C})$ and $A^0_{p_1}$ is the space of  entire functions of infraexponential type.

As it is well-known, the Fourier-Borel transform $\mathcal{F}:H(\C)'\rightarrow Exp(\C)$ defined by $\mathcal{F}(\mu) := \widehat{\mu},$ where $\widehat{\mu}(z) := \mu_{\omega}(e^{z \omega}),$ is a topological isomorphism. As a consequence, the dual space of $Exp(\C)$ can be identified with the space of entire functions, $H(\C).$ In the same way, for $a>1$ and $b$ its conjugate exponent ($a^{-1}+b^{-1}=1$) via the Fourier-Borel transform $\mathcal{F} $ we have the following identifications \cite{Taylor} $$(A_{p_a})'=A^0_{p_b},\ \mbox{and}\ (A^0_{p_a})'=A_{p_b}.$$

From every  (weakly) sufficient set $(z_j)_{j=1}^\infty$ for ($A_p$) $A^0_p$  we can remove finitely many points $(z_j)_{j=1}^N$ and still we have a (weakly) sufficient set (see \cite[Corollary to Proposition 4]{Abanin_Varziev}). In fact, take $Q$ a non constant polynomial which vanishes precisely at points $(z_j)_{j=1}^N.$ Since the multiplication operator $$T_Q(f)(z)=Q(z)f(z)$$ is a topological isomorphism from $A_p$ (resp. $A_p^0$) into itself and pointwisse multiplication by $(Q(z_j))_j$ is continuous on $A_p(S)$ (resp. $A_p^0(S)$) it suffices to apply \ref{remark:algebras}(i).
\par\medskip

Now, we give examples of frames of type $(\delta_{z_i})_i$ in these algebras. We deal first with the Fr\'echet case.

\begin{theorem} Given a growth condition $q$ let $S:=(z_n)_n$ be a sequence in $\mathbb{C}$ with $\lim_j |z_j|=\infty $  and assume that there is $C>0$ such that the distance  $d(z,S)$ satisfies $d(z,S)\leq C|z|/\sqrt{q(|z|)}$ for all $z\in \mathbb{C}.$ Then, the sequence  $(\delta_{z_j})_j$ is a $A_p^0(S)$-frame for $A_p^0$ whenever $p(r)=o(q(r))$ as $r\to \infty.$
\end{theorem}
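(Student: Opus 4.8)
The plan is to reduce the frame property to the corresponding \emph{sufficient set} inequality and then to establish that inequality by a complex analytic estimate built on the density hypothesis. Since $A_p^0=HW$ is a Fréchet space, by the characterization recalled above for the projective case (namely, $(\delta_{z_i})_i$ is a $\ell_\infty W(S)$-frame precisely when $S$ is sufficient), it suffices to prove that for every $n$ there are $m\ge n$ and $C>0$ with
\[
\sup_{z\in\mathbb{C}}|f(z)|e^{-p(|z|)/n}\ \le\ C\,\sup_i|f(z_i)|e^{-p(|z_i|)/m},\qquad f\in A_p^0 .
\]
Write $N_m(f)$ for the right-hand supremum. Injectivity of the restriction map is then automatic (if $N_m(f)=0$ the left-hand side vanishes for every $n$), and by the maximum modulus principle it is enough to prove the inequality for $|z|\ge R_0$: for $|z|<R_0$ one bounds $|f(z)|\le\max_{|\zeta|=R_0}|f(\zeta)|$, and the circle $|\zeta|=R_0$ already lies in the controlled range. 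So I would fix $n$ and concentrate on large $|z|$.

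First I would record the consequence of the doubling condition $(\beta)$ that $r\,p'(r)\le s\,p(r)$ for some $s>0$ and all large $r$, so that over a disc of radius $\rho$ the weight grows by at most $p(|z|+\rho)-p(|z|)\le s\,\rho\,p(|z|)/|z|$. Taking $\rho$ of the order of the density scale $|z|/\sqrt{q(|z|)}$, the hypothesis supplies a sampling point $z_j$ with $|z-z_j|\le C|z|/\sqrt{q(|z|)}$, and the weight gain over such a disc is at most $s\,C\,p(|z|)/\sqrt{q(|z|)}=o(p(|z|))$ because $q\to\infty$. A Cauchy (or Schwarz) estimate on $D(z_j,\rho)$ then gives
\[
|f(z)|\ \le\ \Bigl(1+\tfrac{|z-z_j|}{\rho}\Bigr)|f(z_j)|+\tfrac{|z-z_j|}{\rho}\max_{|w-z_j|=\rho}|f(w)| .
\]
The contribution of $|f(z_j)|$ is harmless: since $p(|z_j|)\le p(|z|)+o(p(|z|))$, it is absorbed into the slack $(\tfrac1n-\tfrac1m)p(|z|)$ by choosing a suitable $m\ge n$ depending on $n$, $s$ and the doubling constant.

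The hard part will be the second term, because $\max_{|w-z_j|=\rho}|f(w)|$ is controlled only by a \emph{full} seminorm of $f$, which cannot be absorbed into $\sup_z|f(z)|e^{-p(|z|)/n}$ in a single step. I would treat it by iterating the local estimate along a chain of discs marching outward: the maximum on $\partial D(z_j,\rho)$ is attained at a point $w_1$, near which the density hypothesis again produces a sampling point, and so on, yielding $|f(z)|\le\sum_k\theta^k|f(z_{j_k})|$ with a contraction factor $\theta<1$ plus a tail $\theta^{K}|f(w_K)|$ that tends to $0$ for each fixed $z$. The delicate point is to guarantee that the moduli $|z_{j_k}|$ do not run away too fast and that the accumulated weight growth $\sum_k[p(|z_{j_{k+1}}|)-p(|z_{j_k}|)]$ stays comparable to $p(|z|)$; this is exactly where $p=o(q)$ (and not merely $q\to\infty$) is needed, since it forces each relative increment to be $\lesssim 1/\sqrt{q(|z_{j_k}|)}=o\bigl(1/\sqrt{p(|z_{j_k}|)}\bigr)$, the decay that must beat the geometric weights $\theta^k$. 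Combined with $(\beta)$ to compare $p$ at comparable radii, this should give $|f(z)|\le C\,N_m(f)\,e^{p(|z|)/n}$ for $|z|\ge R_0$, hence the desired inequality. I expect the genuine obstacle throughout to be that the zeros of $f$ prevent any naive pointwise comparison of $|f|$ at neighbouring points, which is precisely why the argument has to be organised as a telescoping estimate governed by the growth bound rather than a single mean value inequality, and why verifying the convergence of that telescoping under $p=o(q)$ is the technical heart of the proof.
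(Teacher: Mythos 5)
Your reduction of the frame property to the sufficient-set inequality (for every $n$ there exist $m\ge n$ and $C>0$ with $\sup_{z}|f(z)|e^{-p(|z|)/n}\le C\sup_i|f(z_i)|e^{-p(|z_i|)/m}$) is exactly the framework the paper works in, and the injectivity remark is fine. The fatal problem is the chaining step, and it is quantitative, not merely technical. Your scheme loses a fixed multiplicative factor $\theta<1$ per step, while the hypothesis forces the steps to have length about $C|z|/\sqrt{q(|z|)}$. Hence in passing from radius $R$ to $2R$ the chain makes about $\sqrt{q(R)}$ steps and accumulates a gain $\exp\bigl(-c\sqrt{q(R)}\bigr)$; against this you must absorb the growth of the sampled data over the same range, which is a factor $\exp\bigl((p(2R)-p(R))/m\bigr)$, in general of size $\exp\bigl(c\,p(R)/m\bigr)$. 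So your telescoping series (and likewise the tail) can be controlled only when $p(R)=O(\sqrt{q(R)})$ up to logarithms, which is far stronger than $p=o(q)$. Concretely, take $p(r)=r$ (so $A^0_p$ is the space of functions of infraexponential type) and $q(r)=r\log^3 r$: then $p=o(q)$ but $\sqrt{q(r)}=\sqrt{r}\,\log^{3/2}r\ll p(r)$, so for every choice of $m$ the block sums $\sum_k\theta^k e^{p(|z_{j_k}|)/m}$ are not dominated by $e^{p(|z|)/n}$, and the tail $\theta^K|f(w_K)|$ does not tend to $0$ (it diverges for $f$ of order $1$ and type $0$ with $\log\max_{|w|=R}|f(w)|\sim R/\log^2R$, since $\theta^K\approx\exp(-c\sqrt{R_K}\log^{3/2}R_K)$ while $p(R_K)/n'=R_K/n'$). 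Nor can you rescue this by choosing larger discs: optimizing the ratio of step length to disc radius shows that any fixed-multiplicative-loss chain gains at best $\exp(-c\sqrt{q(R)})$ per doubling. Your key heuristic sentence conflates the two relevant quantities: $p=o(q)$ gives $\sqrt{q}\gg\sqrt{p}$ (number of steps much larger than $\sqrt p$ per doubling), but what the argument needs is $\sqrt{q}\gtrsim p$, which is a genuinely different and stronger condition.

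Two further remarks. First, a smaller inaccuracy: condition $(\beta)$ does not yield the increment bound $p(|z|+\rho)-p(|z|)\le s\rho\,p(|z|)/|z|$ ($p$ need not be differentiable, and its growth on $[r,2r]$ may be concentrated on a short subinterval); only the multiplicative bound $p(|z|+\rho)\le Dp(|z|)$ for $\rho\le|z|$ is available, though that part of your argument is repairable by taking $m\ge Dn$. Second, for comparison: the paper's own proof is two lines — it checks that the family of weights $\{ap:\ a>0\}$ satisfies conditions (i)--(iii) on p.~178 of Schneider's paper and then quotes Schneider's Theorem 5.1. The analytic substance is therefore hidden in Schneider's theorem, whose proof controls $\log|f|$ by subharmonicity/minimum-modulus-type estimates that lose only additive errors which are $o(p)$ per step, rather than a fixed multiplicative factor; some refinement of this kind is unavoidable, because, as the computation above shows, any multiplicative-loss chain can only reach the density scale $|z|/p(|z|)$, never the scale $|z|/\sqrt{q(|z|)}$ asserted in the theorem.
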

\begin{proof}
 We take $V(r)=q(r).$ The family $\{ap, \, a>0\}$ satisfies (i), (ii) and (iii) in \cite[p.178]{Schneider} and the conclusion follows after applying \cite[Theorem 5.1]{Schneider}.
\end{proof}

\par\medskip\noindent
In particular, if $p(r)=o(r^2)$ as $r\to \infty$ we may take $q(r)=r^2.$
\begin{corollary}\label{cor:Ap} If $p(r)=o(r^2),$ then for arbitrary $\alpha, \, \beta >0$ the regular lattice $\{\alpha m+i \beta m: \, n, \, m\in \mathbb{Z}\}$ is a sufficient set for $A_p^0(\mathbb{C}).$ In other words, if $S = (z_{n,m})$ where $z_{n,m}:=\alpha n+i \beta m$ then the sequence $(\delta_{z_{n,m}})$ is a $A_p^0(S)$-frame for $A_p^0(\mathbb{C}).$
\end{corollary}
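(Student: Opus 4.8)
The plan is to obtain this as a direct specialization of the preceding theorem by taking the growth condition $q(r)=r^2$. First I would enumerate the lattice points $\{\alpha n+i\beta m : n,m\in\mathbb{Z}\}$ as a single sequence $(z_j)_j$ ordered by increasing modulus, say by running through successive annuli $k\leq |z|<k+1$; since each such annulus contains only finitely many lattice points, this produces an enumeration with $\lim_j|z_j|=\infty$, which is the first hypothesis of the theorem. The choice of enumeration is in any case harmless for the conclusion, since the space $A_p^0(S)$ and its norms $\sup_i|f(z_i)|\omega_n(i)$ depend only on the underlying set $S$ and not on how it is listed.

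The key observation is that with $q(r)=r^2$ one has $\sqrt{q(|z|)}=|z|$, so the distance requirement $d(z,S)\leq C|z|/\sqrt{q(|z|)}$ of the theorem collapses to the plain bound $d(z,S)\leq C$ for all $z\in\mathbb{C}$. For a regular lattice this is immediate: every point lies in some translate of the fundamental cell $[0,\alpha]\times[0,\beta]$, and the point of that cell farthest from its four (lattice) corners is the center, at distance $\tfrac{1}{2}\sqrt{\alpha^2+\beta^2}$ from each. Hence the covering radius of $S$ is finite and the distance condition holds with $C=\tfrac{1}{2}\sqrt{\alpha^2+\beta^2}$.

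Finally, the standing hypothesis $p(r)=o(r^2)$ is exactly $p(r)=o(q(r))$ for this $q$, so all assumptions of the theorem are in force and it yields that $(\delta_{z_{n,m}})$ is an $A_p^0(S)$-frame for $A_p^0(\mathbb{C})$, equivalently that $S$ is a sufficient set. There is genuinely no hard step here: the whole content is the recognition that quadratic growth $q(r)=r^2$ turns the separation/density requirement of the theorem into a mere bounded-covering-radius condition, which any regular lattice trivially satisfies. The only point deserving a word of care is the reordering needed to meet the literal $\lim_j|z_j|=\infty$ hypothesis, and this is harmless for the reasons noted above.
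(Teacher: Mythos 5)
Your proof is correct and follows exactly the paper's route: the paper proves this corollary by the single remark that for $p(r)=o(r^2)$ one may take $q(r)=r^2$ in the preceding theorem, whereupon the distance condition $d(z,S)\leq C|z|/\sqrt{q(|z|)}$ becomes a bounded covering radius requirement that the lattice trivially satisfies. Your additional verifications (the explicit covering radius $\tfrac{1}{2}\sqrt{\alpha^2+\beta^2}$ and the enumeration with $|z_j|\to\infty$) are correct details the paper leaves implicit.
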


\par\medskip\noindent
The former result is also true in the limit case $p(r)=r^2.$ In fact,

\begin{proposition}\label{prop:Ap} If $p(r)=r^2,$ then for arbitrary $\alpha, \, \beta >0$ the regular lattice $\{\alpha m+i \beta m: \, n, \, m\in \mathbb{Z}\}$ is a sufficient set for $A_p^0(\mathbb{C}).$ In other words, if $S = (z_{n,m})$ where $z_{n,m}:=\alpha n+i \beta m$ then the sequence $(\delta_{z_{n,m}})$ is a $A_p^0(S)$-frame for $A_p^0(\mathbb{C}).$
\end{proposition}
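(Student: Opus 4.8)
The plan is to verify condition (ii) of Theorem \ref{frames_DFS} for the space $A_p^0 = HW$ with $w_n = v^{1/n}$, $v = e^{-r^2}$, and the lattice $S = (z_{n,m})$. Since $A_p^0$ is a nuclear Fr\'echet space, it is the projective limit $\mathrm{proj}_n Hw_n(\C)$, and the relevant frame property amounts to $S$ being a \emph{sufficient set}: for each $n$ there are $m \geq n$ and $C > 0$ with
$$
\sup_{z \in \C} |f(z)|\, w_n(z) \leq C \sup_i |f(z_i)|\, w_m(z_i), \qquad f \in A_p^0.
$$
Writing this out with $w_n = e^{-p/n}$ (here $p(z) = |z|^2$), I must bound $|f(z)| e^{-|z|^2/n}$ uniformly by the sampled quantity $\sup_i |f(z_i)| e^{-|z_i|^2/m}$. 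First I would reduce to a purely local estimate: fix $z \in \C$ and pick a lattice point $z_i$ closest to $z$, so that $|z - z_i| \leq D$ for the fixed lattice constant $D = \tfrac12\sqrt{\alpha^2 + \beta^2}$. The task is then to compare $|f(z)|$ with $|f(z_i)|$ over a bounded distance, paying only a controlled exponential weight penalty.

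The key technical tool is a sub-mean-value / Cauchy-type estimate for entire functions against the Gaussian weight. The obstruction to applying the preceding theorem directly is that its hypothesis requires $p(r) = o(q(r))$ with $q(r) = r^2$, whereas now $p(r) = r^2$ sits exactly at the borderline $p = q$, so \cite[Theorem 5.1]{Schneider} is not available and the separating $o$-margin that absorbed all error terms is gone. The main obstacle is therefore to recover that lost margin by hand in the critical case. Concretely, I expect to estimate $|f(z)|$ by integrating $f$ against a normalized Gaussian: using that $f$ is entire, one has representations of the form
$$
|f(z)|^2 e^{-2|z|^2} \leq \frac{C}{\pi} \int_{\C} |f(\zeta)|^2 e^{-2|\zeta|^2}\, dA(\zeta),
$$
or a pointwise version comparing $f(z)$ to values on a disc of radius $R$ about $z$, where $R$ is chosen to grow slowly (e.g.\ $R = \varepsilon |z|$ or $R$ constant) so that the weight ratio $e^{-|z|^2/n} / e^{-|\zeta|^2/m}$ stays bounded while $\zeta$ ranges over the disc. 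Passing from a continuous-disc estimate to the discrete lattice values is possible precisely because the lattice is $D$-dense, so every disc of radius $\geq D$ contains a sample point.

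Carrying this out, I would proceed as follows. \textbf{Step 1:} Fix $n$ and set the target index $m$ to be chosen later (I anticipate $m = 2n$ or similar will create room). \textbf{Step 2:} For each $z$, apply the mean-value inequality over the disc $B(z, D)$ to write $|f(z)| e^{-|z|^2/n}$ in terms of $\sup_{|\zeta - z| \leq D} |f(\zeta)|$ times a weight correction; since $|\zeta|^2 \leq |z|^2 + 2D|z| + D^2$, the ratio of Gaussian weights contributes a factor like $\exp\!\big((\tfrac1m - \tfrac1n)|z|^2 + \tfrac{2D}{m}|z| + \tfrac{D^2}{m}\big)$. \textbf{Step 3:} Observe that for $m < n$ the coefficient $\tfrac1m - \tfrac1n$ of $|z|^2$ is strictly positive, so this factor is \emph{bounded above uniformly in} $z$, which is exactly the gain that replaces the missing $o$-margin. \textbf{Step 4:} Select within $B(z, D)$ a lattice point $z_i$, bound $\sup_{B(z,D)} |f|$ by $|f(z_i)|$ up to the same controlled weight factor (again using boundedness of the radius $D$), and conclude the sufficient-set inequality, hence $A(v_n, S) \subset Hw_m(\C)$ with the norm estimate. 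Injectivity of the restriction map (the uniqueness-set property) is automatic since the lattice has an accumulation-free infinite subset and a nonzero entire function cannot vanish on such a dense lattice while staying in $A_p^0$; alternatively it follows from the borderline sampling theory for the Bargmann--Fock space, which is the natural home of the weight $e^{-r^2}$. The delicate point to watch is choosing $R$ and $m$ so that \emph{both} the disc-to-point comparison and the weight ratio stay simultaneously bounded; this balancing is where the equality $p = q = r^2$ forces the argument to be sharp rather than slack.
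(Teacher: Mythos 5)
Your target inequality is the right one (sufficiency of $S$ for the Fr\'echet space $A_p^0$), although Theorem \ref{frames_DFS} is not the right citation: that theorem concerns $(DFS)$-spaces and weakly sufficient sets, whereas what you need is simply the characterization of sufficient sets for $HW(G)$ stated in Section \ref{Examples}. The fatal problem is Step 4. You propose to bound $\sup_{\zeta\in B(z,D)}|f(\zeta)|$ by $|f(z_i)|$ at a single lattice point $z_i\in B(z,D)$, up to a controlled factor. No such reverse estimate exists for holomorphic functions: the polynomial $f(\zeta)=\prod_{z_j\in S\cap B(z,D)}(\zeta-z_j)$ lies in $A_p^0$, vanishes at every lattice point of the disc, and is nonzero at any $z\notin S$, so the proposed local comparison fails with any constant. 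This is not a technicality that a sharper choice of radii can repair: sampling inequalities are intrinsically global, and the Seip--Wall\-st\'en density theorem shows that a fixed lattice fails to be sampling for the Fock space with weight $e^{-\gamma|z|^2}$ once $\gamma$ is large relative to the lattice density. Your argument never uses the smallness of the Gaussian parameters $1/n$ relative to the density of $S$ --- it only uses that $S$ is $D$-dense --- so if it were correct it would prove that every uniformly dense set is sampling for every Gaussian weight, contradicting that theorem. There is also a sign error in Step 3: with $m<n$ the coefficient $\frac{1}{m}-\frac{1}{n}$ is indeed positive, but then $\exp\bigl((\frac{1}{m}-\frac{1}{n})|z|^2\bigr)$ is unbounded in $z$; what one needs is $m>n$, so that the negative quadratic term absorbs the linear error terms (and $m\geq n$ is in any case the only useful choice, since $w_m$ increases with $m$).

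By contrast, the paper's proof is short and global: for $p(r)=r^2$ the space $A_p^0$ coincides algebraically and topologically with $\bigcap_{\gamma>0}\mathcal{F}^2_\gamma$, the intersection of the Bargmann--Fock spaces; the Seip--Wallst\'en theorem \cite{seip_walsten} then provides, for every $\gamma$ in the relevant range, the two-sided $\ell_2$ sampling inequality for the lattice, and one concludes by observing that in the definition of $A_p^0(S)$ the $\ell_\infty$ norms may be replaced by $\ell_2$ norms. You mention Bargmann--Fock sampling only as an aside, as one possible route to injectivity; in fact it is the heart of the matter. Injectivity alone --- which your ``accumulation-free infinite subset'' remark does not establish, since zero sets of entire functions are themselves accumulation-free infinite sets, so a Jensen-type density argument is genuinely needed --- is in any case far weaker than the norm inequality the proposition requires.
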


\begin{proof} First, we observe that in this case, $A_p^0(\mathbb{C})$ coincides algebraically and topologically with the intersection $$\bigcap _{\gamma>0}{\mathcal F}^2_\gamma$$ of the Bargmann-Fock spaces $${\mathcal F}^2_\gamma:=\left\{ h\in H(\mathbb{C}): ||f||_\gamma := \int_\mathbb{C}|f(z)|^2e^{-\gamma |z|^2}dz <\infty\right\}.$$ Then, by \cite{seip_walsten}, there is $\gamma_0$ such that for $\gamma \geq \gamma_0$ we find constants $A_\gamma, \, B_\gamma$ such that $$A_\gamma ||f||^2_\gamma \leq \sum _{n,m}|f(z_{n,m})|^2e^{-\gamma |z_{n,m}|^2}\leq B_\gamma ||f||^2_\gamma.$$
To finish, it is enough to observe that in the definition of $A_p^0(S)$ one can replace the $\ell_\infty$ norms by $\ell_2$-norms.
\end{proof}

According to \cite{Schneider} (see the comments after Corollary 4.9) there is an entire function of order $2$ and finite type which vanishes at the lattice points $S = \{n+im:\,n, m\in \mathbb{Z}\}.$ In the case $r^2=o(p(r))$ we have $f\in A^0_p,$ and the restriction map defined on $A_p^0$ by $f\mapsto f|_S$ is not injective. Consequently, the lattice points are not a sufficient set for $A^0_p.$ Similarly, the lattice points are not a weakly sufficient set for $A_p$ in the case $r^2= O(p(r)).$

\par\medskip

From \cite[Proposition 8.1]{Schneider} and Theorem \ref{effective set} we get

\begin{proposition} If $p(r)=o(r^2),$ then for arbitrary $\alpha, \, \beta >0$ the regular lattice $\{\alpha m+i \beta m: \, n, \, m\in \mathbb{Z}\}$ is a weakly sufficient set for $A_p(\mathbb{C}).$ In other words, if $S = (z_{n,m})$ where $z_{n,m}:=\alpha n+i \beta m$ then the sequence $(\delta_{z_{n,m}})$ is a $A_p(S)$-frame for $A_p(\mathbb{C}).$
\end{proposition}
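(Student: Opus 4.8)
The plan is to apply Theorem \ref{effective set} to the concrete inductive limit $A_p = VH$, where $V = (v_n)_n$ with $v_n(z) = e^{-np(|z|)}$. First I would verify that the standing hypothesis of that theorem is met: here $\frac{v_{n+1}}{v_n} = e^{-p(|z|)}$, and growth condition $(\alpha)$ forces $p(r)\to\infty$, so $v_{n+1}/v_n$ vanishes at infinity on $\mathbb{C}$. It therefore suffices to establish condition (ii) of Theorem \ref{effective set}, namely that the restriction map $A_p \to A_p(S)$ is injective (i.e.\ $S$ is a uniqueness set) and that for each $n$ there are $m\geq n$ and $C>0$ with $A(v_n,S)\subset Hv_m(\mathbb{C})$ and $\|f\|_m \leq C\,\|f\|_{n,S}$ for all $f$.

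The quantitative half of (ii) is exactly the sampling-type inequality furnished by \cite[Proposition 8.1]{Schneider}. Concretely, I would take $q(r)=r^2$, so that the assumption $p(r)=o(r^2)$ reads $p=o(q)$, and observe that for a regular lattice $S=(\alpha n + i\beta m)$ the distance $d(z,S)$ is bounded by half the diagonal of the fundamental cell; since $\sqrt{q(|z|)}=|z|$, the geometric condition $d(z,S)\leq C|z|/\sqrt{q(|z|)}$ holds trivially. This is precisely the input under which Schneider's result produces, for each $n$, an index $m$ and a constant $C$ such that every $f\in A_p$ whose restriction to $S$ lies in $\ell_\infty(\nu_n)$ satisfies $\sup_{z}|f(z)|v_m(z)\leq C\sup_i |f(z_i)|\nu_n(i)$, which is the inclusion $A(v_n,S)\subset Hv_m$ together with the norm estimate. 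For injectivity I would argue that no nonzero $f\in A_p$ can vanish on the whole lattice: the lattice zero-counting function grows like $cr^2$, which by Jensen's formula forces any entire function vanishing on it to have order at least $2$ and, at order $2$, positive type, and this is incompatible with the bound $\log|f(z)|\leq Ap(|z|)+O(1)=o(r^2)$ valid for $f\in A_p$ when $p(r)=o(r^2)$. This is the mirror image of the obstruction recorded in the remarks after Proposition \ref{prop:Ap} for the case $r^2=O(p(r))$.

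With both parts of condition (ii) in place, Theorem \ref{effective set} yields that $S$ is weakly sufficient for $A_p$, and by the discussion preceding Theorem \ref{ws-Montel} this is equivalent to $(\delta_{z_{n,m}})$ being an $A_p(S)$-frame, which is the assertion. The main obstacle I expect is bookkeeping rather than conceptual: one must match the precise hypotheses and the index shift $n\mapsto m$ in \cite[Proposition 8.1]{Schneider} to the inclusion demanded in Theorem \ref{effective set}(ii), extracting in particular a uniform constant $C$ for the estimate $\|f\|_m\leq C\,\|f\|_{n,S}$. Once the bounded-distance observation for the lattice is made, checking that $S$ meets Schneider's separation and density requirements for $q(r)=r^2$ is routine.
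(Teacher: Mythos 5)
Your proposal follows essentially the same route as the paper, whose entire proof is the one-line combination of \cite[Proposition 8.1]{Schneider} with Theorem \ref{effective set}: you invoke exactly these two ingredients, and your added details (checking that $v_{n+1}/v_n=e^{-p}$ vanishes at infinity so the theorem applies, the Jensen-formula argument showing the lattice is a uniqueness set for $A_p$ when $p(r)=o(r^2)$, and the passage from weak sufficiency to the frame statement via the Montel property) correctly fill in what the paper leaves implicit. No gap.
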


In particular, for the space $Exp(\mathbb{C}),$ the sequence $(\delta_{n+im})_{n,m\in \mathbb{Z}}$ is a $A_p(S)$-frame \cite[Theorem 1]{Taylor_1972_Discrete}. Here $p(z) = |z|$ and $S = (n+im)_{n,m\in \mathbb{Z}}.$
\par\medskip

By Proposition \ref{frames-representing} if $S=(z_i)_i\subset G$ is a discrete (weakly) sufficient set in $HW(G)$ (resp. in $VH(G)$) each element in the dual space can be represented as a convergent series of type $$\sum_i \alpha_i \delta_{z_i}$$ with coefficients in a given sequence space. Since the spaces under consideration are algebras, this representation is not unique by \ref{remark:algebras}(ii). As in many cases the dual space can be identified with a weighted space of holomorphic functions (via the Laplace or the Fourier-Borel transform) in such a way that point evaluations $\delta_{z_i}$ are identified with the exponentials $e^{z_iz}$, therefore we get a representation of the elements in the dual space as Dirichlet series, thus obtaining as a consequence several known results, for instance:

\begin{corollary}{\rm (\cite{Taylor_1972_Discrete})}
Every entire function $f(z)$ can be represented in the form $$f(z) = \sum_{n,m = -\infty}^{\infty} a_{n,m}e^{(n + im)z}$$ where $|a_{n,m}|e^{k(n^2 + m^2)^{1/2}} \to 0$ as $n^2 + m^2 \to +\infty$ for every $k > 0.$ Such expansion of $f$ is never unique.
\end{corollary}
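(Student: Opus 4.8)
The plan is to obtain the corollary by applying Proposition \ref{frames-representing} to $E=Exp(\C)$ and then transporting the conclusion through the Fourier--Borel transform. Write $p(z)=|z|$, enumerate $S=(z_i)_i$ (with the lattice points $z_{n,m}=n+im$) so that $|z_i|\to\infty$, and set $\Lambda=A_p(S)=V\ell_\infty(S)=\mathrm{ind}_k\,\ell_\infty(\nu_k)$ with $\nu_k(i)=e^{-k|z_i|}$; the proposition preceding the corollary records that $(\delta_{z_{n,m}})$ is a $\Lambda$-frame for $Exp(\C)$. Before invoking Proposition \ref{frames-representing} I would verify its hypotheses here: $Exp(\C)=A_{p_1}$ is a $(DFN)$-space, hence Montel and reflexive; $\Lambda$ is an $(LB)$-space, hence barrelled; and the canonical unit vectors form a Schauder basis of $\Lambda$. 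The last point is what needs the weight decay: since $\nu_{k+1}(i)/\nu_k(i)=e^{-|z_i|}\to 0$ as $|z_i|\to\infty$, for $x\in\ell_\infty(\nu_k)$ the tail $\sum_{i>N}x_ie_i$ tends to $0$ in $\ell_\infty(\nu_{k+1})$, so $x=\sum_i x_i e_i$ converges in $\Lambda$.

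With the hypotheses secured I would apply Proposition \ref{frames-representing}(ii): as $Exp(\C)$ is reflexive and $(\delta_{z_{n,m}})$ is a $\Lambda$-frame, it is a $\Lambda'$-representing system for $(Exp(\C)',\beta(Exp(\C)',Exp(\C)))$. I then transport this through the Fourier--Borel transform. Since $H(\C)$ is a reflexive Fr\'echet space and $\mathcal F:H(\C)'\to Exp(\C)$ is a topological isomorphism, dualizing identifies $(Exp(\C)',\beta)$ with $H(\C)$ in its compact-open topology, the point evaluations $\delta_{z_{n,m}}$ being identified with the exponentials $e^{z_{n,m}z}$. Consequently every entire function $f$, viewed as an element of $Exp(\C)'\cong H(\C)$, admits a representation $f(z)=\sum_{n,m}a_{n,m}e^{(n+im)z}$ with $(a_{n,m})\in\Lambda'$, the series converging uniformly on compact sets.

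It then remains to read off the growth condition, i.e.\ to identify $\Lambda'=\Lambda^\beta$. Testing $\sum_i x_iy_i$ against $x_i=\operatorname{sgn}(y_i)e^{k|z_i|}\in\ell_\infty(\nu_k)$ forces absolute convergence, giving $\ell_\infty(\nu_k)^\beta=\ell_1\!\big(e^{k|z_i|}\big)$; since the $\beta$-dual of an inductive limit is the intersection of the $\beta$-duals,
\[
\Lambda'=\Big\{(a_{n,m}):\ \sum_{n,m}|a_{n,m}|\,e^{k\sqrt{n^2+m^2}}<\infty\ \text{ for every }k>0\Big\}.
\]
Because $\sum_{n,m}|a_{n,m}|e^{(k+1)\sqrt{n^2+m^2}}<\infty$ forces $|a_{n,m}|e^{k\sqrt{n^2+m^2}}\to 0$, the coefficients produced above satisfy the stated condition $|a_{n,m}|e^{k(n^2+m^2)^{1/2}}\to 0$ for every $k>0$.

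Finally, the non-uniqueness is immediate from Remark \ref{remark:algebras}(ii): $Exp(\C)$ is a locally convex algebra with non-empty spectrum and no zero-divisors (a product of non-zero entire functions is non-zero), $\Lambda$ is barrelled, and the frame functionals $\delta_{z_{n,m}}$ are multiplicative, so no element of $Exp(\C)'$ has a unique expansion of this form. I expect the main obstacle to be exactly the verification that the unit vectors form a Schauder basis of $V\ell_\infty(S)$ — this fails for ordinary $\ell_\infty$ and is rescued here only by the Schwartz-type decay $\nu_{k+1}/\nu_k\to0$ — since this is precisely what makes Proposition \ref{frames-representing} applicable; once it is in place, the Fourier--Borel transport and the $\beta$-dual computation are routine.
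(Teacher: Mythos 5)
Your proposal is correct and follows essentially the same route as the paper, which obtains the corollary from Proposition \ref{frames-representing} applied to the $A_p(S)$-frame $(\delta_{n+im})_{n,m}$ for $Exp(\C)$ (the proposition immediately preceding the corollary, with $p(r)=r=o(r^2)$), the Fourier--Borel identification of $Exp(\C)'$ with $H(\C)$ sending $\delta_{z_i}$ to $e^{z_i z}$, and Remark \ref{remark:algebras}(ii) for the non-uniqueness. The paper leaves implicit the points you spell out --- the Schauder basis property of the unit vectors in $V\ell_\infty(S)$ via the decay $\nu_{k+1}/\nu_k\to 0$, and the computation $\Lambda'=\bigcap_k\ell_1\left(e^{k|z_i|}\right)$ --- and your verifications of these are accurate.
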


\begin{corollary}\label{cor:rep-Ap} For $a\geq 2$ every function $f\in A_{p_a}$ and can be represented in the form $$f(z) = \sum_{n,m = -\infty}^{\infty} a_{n,m}e^{(n + im)z}$$ with coefficients $(a_{n,m})$  satisfying $$|a_{n,m}|\leq C{\rm exp}(-\varepsilon(n^2+m^2)^{b/2})$$ ($b$ the conjugate of $a$) for some constants $\varepsilon,\, C>0.$
 \end{corollary}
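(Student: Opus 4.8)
The plan is to realize $A_{p_a}$ as the strong dual of a reflexive Fréchet space in which the lattice evaluations are already known to form a frame, and then to transport the frame/representing-system duality of Proposition~\ref{frames-representing}(ii) across the Fourier--Borel transform. Let $b$ be the conjugate exponent of $a$, so that $1<b\le 2$ because $a\ge 2$. I would set $E:=A^0_{p_b}$, a nuclear Fréchet space and hence reflexive, and use the Fourier--Borel identification $(A^0_{p_b})'=A_{p_a}$ (the stated identification applied with the roles of the exponents interchanged; $b>1$ since $a<\infty$), under which the point evaluation $\delta_{z_{n,m}}$ at $z_{n,m}=n+im$ is sent to $\mathcal{F}(\delta_{z_{n,m}})(z)=e^{(n+im)z}$. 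Since $b\le 2$, the lattice $S=(z_{n,m})$ is a sufficient set for $A^0_{p_b}$: this is Corollary~\ref{cor:Ap} when $b<2$ (so that $p_b(r)=r^b=o(r^2)$) and Proposition~\ref{prop:Ap} when $b=2$. Equivalently, $(\delta_{z_{n,m}})$ is a $\Lambda$-frame for $E$ with $\Lambda:=A^0_{p_b}(S)=\bigcap_k\ell_\infty(\omega_k)$ and $\omega_k(z_{n,m})=\exp(-\tfrac1k(n^2+m^2)^{b/2})$.

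Next I would invoke Proposition~\ref{frames-representing}(ii). As $E$ is reflexive and $(\delta_{z_{n,m}})$ is a $\Lambda$-frame, $(\delta_{z_{n,m}})$ is a $\Lambda'$-representing system for $(E',\beta(E',E))$; that is, each $\mu\in E'$ can be written $\mu=\sum_{n,m}a_{n,m}\delta_{z_{n,m}}$ with $(a_{n,m})\in\Lambda'$ and strong convergence. Applying the topological isomorphism $\mathcal{F}\colon E'_\beta\to A_{p_a}$ termwise then gives, for every $f\in A_{p_a}$, the expansion $f(z)=\sum_{n,m}a_{n,m}e^{(n+im)z}$ with convergence in $A_{p_a}$, hence locally uniformly.

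It remains to read off the coefficient estimate, i.e.\ to identify $\Lambda'=(A^0_{p_b}(S))'$; this is the only genuine computation. I would first check the hypotheses of Lemma~\ref{FR_Pre_Lem_DualS}: $\Lambda$ is Fréchet, hence barrelled, and the canonical unit vectors are a Schauder basis, because $|z_{n,m}|\to\infty$ along $S$ and, for $k'>k$, the ratio $\omega_k/\omega_{k'}=\exp\bigl(-(\tfrac1k-\tfrac1{k'})(n^2+m^2)^{b/2}\bigr)\to 0$, so the modulus-ordered partial sums converge in each seminorm. Lemma~\ref{FR_Pre_Lem_DualS} then identifies $\Lambda'$ with $\Lambda^\beta$, and the crux is the inclusion $\Lambda^\beta\subseteq\bigcup_k\ell_\infty(1/\omega_k)$: a sequence pairing with every element of $\Lambda$ must satisfy $\sup_{n,m}|a_{n,m}|\exp(\tfrac1k(n^2+m^2)^{b/2})<\infty$ for some $k$, which, with $\varepsilon=1/k$, is exactly $|a_{n,m}|\le C\exp(-\varepsilon(n^2+m^2)^{b/2})$. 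This inclusion is the main obstacle, and I would establish it by a gliding-hump argument (or by citing the weighted projective/inductive duality theory of Bierstedt, Meise and Summers), building from a sequence that violates every such bound an element of $\Lambda$ against which the pairing diverges. The reverse containment, needed only for sharpness, is immediate: if $|a_{n,m}|\le C\,\omega_k(z_{n,m})^{-1}$ then pairing against any $x\in\Lambda$ (which grows slower than $\exp(\eta(n^2+m^2)^{b/2})$ for every $\eta>0$) yields superpolynomially decaying terms, while the lattice counting function is only polynomial, so the series converges absolutely. Finally, the non-uniqueness of the expansion follows from Remark~\ref{remark:algebras}(ii).
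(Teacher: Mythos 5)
Your proposal is correct and takes essentially the same route as the paper's own proof, which likewise combines Corollary \ref{cor:Ap} and Proposition \ref{prop:Ap} (to get the $A^0_{p_b}(S)$-frame for $A^0_{p_b}$, $b\le 2$) with the Fourier--Borel identification $(A^0_{p_b})'=A_{p_a}$ and Proposition \ref{frames-representing}(ii), reading off the coefficient estimate from the identification of $\Lambda'=(A^0_{p_b}(S))'$ as $\bigl\{(a_{n,m}):\ \sup_{n,m}|a_{n,m}|\exp\bigl(\varepsilon(n^2+m^2)^{b/2}\bigr)<\infty\ \mbox{for some}\ \varepsilon>0\bigr\}$. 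The only difference is level of detail: the paper asserts this dual identification without proof, whereas you sketch its verification (Schauder basis of the unit vectors, $\beta$-dual computation), modulo a harmless notational slip where you write $|a_{n,m}|\le C\,\omega_k(z_{n,m})^{-1}$ instead of $|a_{n,m}|\le C\,\omega_k(z_{n,m})$ for membership in $\ell_\infty(1/\omega_k)$.
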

\begin{proof}
 According to Corollary \ref{cor:Ap} and Proposition \ref{prop:Ap} the sequence $S = \{e^{(n+im)z}:\ n,m\in {\mathbb Z}\}\subset A_{p_a}$ is a $A_{p_b}^0(S)$-frame for $A_{p_b}^0.$ Since the dual space of $\Lambda = A_{p_b}^0(S)$ is
$$
\Lambda' = \left\{(a_{n,m}):\ |a_{n,m}|{\rm exp}(\varepsilon(n^2+m^2)^{b/2})< \infty\ \mbox{for some}\ \varepsilon > 0\right\}
$$ it suffices to apply Proposition \ref{frames-representing} to conclude.
\end{proof}

\par\medskip

\textbf{Acknowledgements.}
The present research  was partially supported by the projects  MTM2013-43450-P and GVA Prometeo II/2013/013 (Spain).

\par\medskip

\noindent \textbf{Authors' addresses:}\\

Jos\'e Bonet: Instituto Universitario de Matem\'{a}tica Pura y Aplicada IUMPA,
Universitat Polit\`{e}cnica de Val\`{e}ncia,  E-46071 Valencia, Spain

email: jbonet@mat.upv.es \\

Carmen Fern\'andez and Antonio Galbis:
Departamento de An\'alisis Matem\'atico,
Universitat de Val\`encia,
E-46100 Burjasot (Valencia), Spain.

email: fernand@uv.es and antonio.galbis@uv.es \\

Juan M. Ribera:
Instituto Universitario de Matem\'{a}tica Pura y Aplicada IUMPA,
Universitat Polit\`{e}cnica de Val\`{e}ncia,  E-46071 Valencia, Spain.

email: juaripuc@mat.upv.es

\end{document}